\documentclass{amsart}

\usepackage[sc]{mathpazo}
\usepackage{eulervm}
\linespread{1.05}  

\usepackage{todonotes}

\usepackage[T1]{fontenc}
\usepackage[utf8]{inputenc}
\usepackage[english]{babel}
\usepackage{tikz}
\usepackage{amssymb}
\usepackage{mathrsfs}
\usepackage{float}
\usepackage{IEEEtrantools}
\usepackage{multirow}
\usepackage{amsthm}
\usepackage{amsmath}
\usepackage{amstext}
\usepackage{enumerate}

\usepackage{hyperref}

\newtheorem{theorem}{Theorem}
\newtheorem*{theorem*}{Theorem}
\newtheorem{lemma}{Lemma}
\newtheorem{proposition}{Proposition}
\newtheorem{corollary}{Corollary}
\theoremstyle{definition}
\newtheorem{definition}{Definition}

\theoremstyle{remark}
\newtheorem{remark}{Remark}
\newtheorem*{remark*}{Remark}

\title[Shellability of posets of labeled partitions]{Shellability of posets of labeled partitions and  arrangements defined by root systems}
\author{Emanuele Delucchi, Noriane Girard and Giovanni Paolini}

\newcommand{\lpns}{\Pi_{n,\Sigma}}
\newcommand{\E}{\mathcal E}

\newcommand{\precdot}{\prec \! \cdotp}
\newcommand{\succdot}{\,\cdotp \!\! \succ}

\newcommand{\one}{\hat{1}}
\newcommand{\zero}{\hat{0}}

\tikzstyle{treestyle}=[
  baseline=(current bounding box.north),
  level/.style={sibling distance=15mm},
  every node/.style={inner sep=2pt},
  level distance=12mm
]

\begin{document}

\maketitle

\vspace{-15pt}
\begin{abstract}
We prove that the posets of connected components of intersections of toric and elliptic arrangements defined by root systems  are EL-shellable and we compute their homotopy type.
Our method rests on Bibby's description of such posets by means of ``labeled partitions'': after giving an EL-labeling and counting homology chains for general posets of labeled partitions, we  obtain the stated results by considering the appropriate subposets.
\end{abstract}

\section{Introduction}

The focus of the theory of arrangements, which 
to date 
has been mainly on the study of linear subspaces, has recently broadened to arrangements of 
hypersurfaces in complex tori or in products of elliptic curves (called ``toric'' and ``elliptic'' arrangements).
This led to a renewed interest in the general combinatorial theory of topological dissections, which dates back to the Seventies. Here the poset of connected components of intersections of a family of submanifolds (which we will call \emph{poset of components} for short) has played a crucial role ever since Zaslavsky's seminal paper \cite{ZaT}.

In the classical case of arrangements of linear hyperplanes, the poset of components coincides with the poset of intersections, i.e., with the poset of flats of the associated matroid, and thus has the structure of a geometric lattice. In the case of toric arrangements, the poset of components has been studied for combinatorial purposes 
\cite{ERS,Law,M}
as well as in view of topological applications \cite{dCP2,CaDe}.
The literature on elliptic arrangements is still sparse; of particular interest for us is the work of Christin Bibby \cite{Bibby} who began a unified treatment of linear, toric and elliptic arrangements.

One of the main initial impulses for the study of linear arrangements came from the theory of Coxeter groups and root systems.
In fact, to every (finite) root system one can associate the linear arrangement of ``reflecting'' hyperplanes in the (contragradient) representation of the associated Coxeter group, and the rich combinatorial structure of Coxeter groups allows a particularly explicit description of the posets of intersections of such arrangements.
Bibby showed that this nice combinatorial description extends naturally to the toric and elliptic case (see Section \ref{ss_ars} for details).
Thus, arrangements defined by root systems appear to be a natural testbed for the study of arrangements also beyond the linear case.

\medskip

Shellability is a property of cell complexes first formalized in the context of boundaries of polytopes, where it gained celebrity due to its role in the solution of the upper bound conjecture for convex polytopes by McMullen \cite{McMullen}. The notion of shellability was subsequently extended to the context of simplicial (and regular CW-) complexes and is still widely studied nowadays due to its strong topological and algebraic implications. For instance: a shellable simplicial complex is homotopy equivalent to a wedge of spheres, and its Stanley-Reisner ring is Cohen-Macaulay. We refer to \cite{Kozlov} for a wider background on shellability, and we focus here on the case of partially ordered sets. A partially ordered set (or \emph{poset}) is called shellable if the simplicial complex of its totally ordered subsets (i.e., its {\em order complex}) is shellable. Shellability of posets was introduced by Bj\"orner, leading to the theory of lexicographic shellability that was first developed by Bj\"orner and Wachs \cite{BW1,BW2} and subsequently blossomed garnering the interest of a wide research community.

Posets of components of linear hyperplane arrangements --  and, more generally, geometric lattices -- are shellable. Indeed, geometric lattices can be characterized by their particularly nice shellability properties \cite{DH} and the number of spheres in the homotopy type of their order complex is an evaluation of the characteristic polynomial of the associated matroid. It is therefore natural to ask whether posets of components of toric and elliptic arrangements are shellable. 
 
%
%


In this paper we focus on the case of arrangements associated to root systems, where we can take advantage of Bibby's description of the posets of components as certain posets of labeled partitions. This leads us to introduce a two-parameter class of posets of labeled partitions which we prove to be EL-shellable. The posets of components of linear, toric or elliptic arrangements of every Weyl type are subposets of elements of our two-parameter class and the induced labelings are, in fact, EL-labelings. In particular, the homotopy type of those posets is that of a wedge of spheres; the number of these spheres can be interpreted in general as the number of \emph{$r$-flourishing, $q$-blooming trees} (see Definitions \ref{def_flourishing} and \ref{def_blooming}).
Then, we have a ``case-and-type free'' proof of the following.

\begin{theorem*}
Let $\mathcal A$ be a (linear, toric or elliptic) arrangement defined by a Weyl system and let $\mathcal C(\mathcal A)$ denote the associated poset of components. Then, $\mathcal C(\mathcal A)$ is EL-shellable. In particular the order complex of the poset $\overline{\mathcal C(\mathcal A)}$ (obtained by removing the minimum $\hat{0}$ and the maximum $\hat{1}$ if necessary) is homotopy equivalent to a wedge of spheres of dimension equal to the rank of the root system.
Closed formulas for the number of spheres are given in Table \ref{table:num-spheres}.
\end{theorem*}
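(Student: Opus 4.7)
The plan is to combine Bibby's description of $\mathcal{C}(\mathcal{A})$ as a poset of labeled partitions with the EL-shellability of the two-parameter family $\Pi_{n,\Sigma}$ proved earlier in the paper. The virtue of this route is that it handles all Weyl types (linear, toric and elliptic) uniformly, rather than requiring a separate argument for each root system.

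First, for each Weyl arrangement $\mathcal{A}$ of rank $\ell$, I would use Bibby's result to realize $\mathcal{C}(\mathcal{A})$ as a concrete subposet of an appropriately chosen $\Pi_{n,\Sigma}$: the parameter $n$ is determined by the rank, and $\Sigma$ encodes both the ambient geometry (linear / toric / elliptic) and the Weyl type ($A$, $B/C$, or $D$) via the corresponding label set. Once this embedding is spelled out, the root-system case is reduced to an instance of the labeled-partition framework.

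Second, I would take the EL-labeling $\lambda$ of $\Pi_{n,\Sigma}$ supplied by the earlier part of the paper and prove that its restriction to $\mathcal{C}(\mathcal{A})\subseteq\Pi_{n,\Sigma}$ remains an EL-labeling of $\mathcal{C}(\mathcal{A})$. Concretely, the point to verify is that for any comparable pair $x<y$ in $\mathcal{C}(\mathcal{A})$, the unique $\lambda$-lex-minimal increasing maximal chain from $x$ to $y$ in the ambient $\Pi_{n,\Sigma}$ in fact stays inside $\mathcal{C}(\mathcal{A})$. This is the hard part: restrictions of EL-labelings to arbitrary subposets are notoriously ill-behaved, and here one really has to exploit the explicit combinatorics of the subposets cut out by each Weyl type. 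I expect the check to proceed type by type, but uniformly within the labeled-partition language, so that the case-by-case nature is confined to a bookkeeping verification rather than a genuinely new argument per type.

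Third, once EL-shellability of $\mathcal{C}(\mathcal{A})$ is in hand, the Bj\"orner--Wachs theory immediately gives that $\overline{\mathcal{C}(\mathcal{A})}$ is homotopy equivalent to a wedge of top-dimensional spheres, whose cardinality equals the number of $\lambda$-falling maximal chains. To reach the closed formulas in Table \ref{table:num-spheres}, I would then specialize the tree-bijection counting falling maximal chains in $\Pi_{n,\Sigma}$ (via $r$-flourishing, $q$-blooming trees, Definitions \ref{def_flourishing} and \ref{def_blooming}) to the pair $(r,q)$ dictated by the Weyl type and by the ambient variety, and carry out the resulting enumeration. The main obstacle throughout is the second step: everything else either follows from the general theorem about $\Pi_{n,\Sigma}$ or reduces to routine combinatorial evaluation, but comparing lex-minimal rising chains in $\Pi_{n,\Sigma}$ with those in $\mathcal{C}(\mathcal{A})$ requires working directly with the labeling and the concrete description of each Weyl-type subposet.
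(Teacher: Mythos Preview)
Your proposal is correct and follows essentially the same route as the paper: identify $\mathcal{C}(\mathcal{A})$ with a subposet of $\Pi_{n,\Sigma}$ via Bibby, restrict the EL-labeling $\lambda$ of $\Pi_{n,\Sigma}$, and verify (via the criterion of Lemma~\ref{lem_sub}) that the unique increasing chain in each interval of $\Pi_{n,\Sigma}$ stays inside the subposet; then count falling chains using the tree bijections. One small inaccuracy: $\Sigma$ encodes only the ambient geometry (through $|\Sigma|\in\{1,2,4\}$), while the Weyl type is encoded by \emph{which} subposet of $\Pi_{n,\Sigma}$ one takes---but this does not affect the strategy, and the paper's check in the second step is indeed light (types $A$ and $C$ are trivial, types $B$ and $D$ are handled by a single short contradiction argument).
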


\begin{table}[htbp]
  \begin{center}
  \begin{tabular}{c|c|c|c}
    Type & Linear & Toric & Elliptic \\\hline\hline
    $A_n$ & $n!$ & $n!$ & $n!$ \\\hline
    $B_n$ & \multirow{2}{*}{$(2n -1)!!$} & $(2n-3) !! (n-1)$ & \footnotesize{[...]} \\ \cline{1-1}\cline{3-4}
    $C_n$ & & $(2n-1) !!$ & $(2n+1)!!$ \\\hline
    $D_n$ & $(2n-3) !! (n-1)$ & $(2n-5)!! (n^2-3n+3)$ & \footnotesize{[...]}
  \end{tabular}
  \end{center}
  \bigskip
  \caption{Closed formulas for the number of spheres.
  In the two missing cases, the number of spheres can be obtained by setting $m=4$ in the formulas of Theorem \ref{thm:decreasing-chains}. See Remark \ref{rem_C1} for a discussion of how some of these values could also be obtained as evaluations of characteristic or Tutte-type polynomials.}
  \label{table:num-spheres}
\end{table}


This paper is structured as follows.
In Section \ref{sec_pre} we introduce the poset $\lpns$ of \emph{$\Sigma$-labeled partitions} of the set $\{1,\overline{1},\ldots,n,\overline{n}\}$, where $\Sigma$ is any finite set. We then describe the four classes of subposets of $\lpns$ which, following Bibby \cite{Bibby}, are isomorphic to the posets of components of linear ($\vert\Sigma\vert=1$), toric ($\vert\Sigma\vert=2$) and elliptic ($\vert\Sigma\vert=4$) arrangements defined by the four infinite Weyl families.
In Section \ref{sec_she} we recall the notion of shellability and review some technical results on edge-lexicographic shellings.
In Section \ref{sec_res} we prove the shellability of posets $\lpns$ by constructing an EL-labeling. Then we prove that this labeling induces EL-labelings on all the above-mentioned subposets corresponding to posets of components of arrangements defined by root systems.
In particular, the (reduced) order complexes of said posets are homotopy equivalent of a wedge of spheres; the homotopy type of such posets is hence determined by the number of these spheres. The closing Section \ref{sec_sph} tackles this enumeration problem: we give general formulas (for any $\Sigma$) and deduce closed expressions for (almost) all cases of arrangements defined by root systems.

\subsection*{Acknowledgements} We acknowledge support by the Swiss National Science Foundation Professorship grant PP00P2\_150552/1. The research presented here took place during the second author's work on her master's thesis, written under the first author's supervision, while the third author was visiting University of Fribourg on a Swiss Government Excellence Fellowship. We also thank Masahiko Yoshinaga and Nhat Tan Tran for their feedback after a conference talk presentation of part of this work.

\section{Preliminaries}
\label{sec_pre}

In this section we introduce the main characters and review some relevant results from the literature. On matters concerning posets we adopt the terminology from \cite{Stanley}.

\subsection{The poset of $\Sigma$-labeled partitions of $[[n]]$}
A {\em partition} of a given finite set $S$ is a collection $\pi=\{B_1,\ldots,B_l\}$ of disjoint sets, called ``blocks'', such that $\bigcup_i B_i = S$. We will signify this by writing 
\[ \pi=B_1 \mid \cdots \mid B_l. \]
Let now $\Sigma$ be a finite set (of ``signs''). A {\em partition of $S$ labeled by $\Sigma$} is a partition $\pi$ of $S$ together with a subset $T \subseteq \pi$ and an injection $f:T \to \Sigma$. The blocks in $T$ are called {\em signed blocks} of $\pi$ and those of $\pi \setminus T$ are called {\em unsigned blocks} of $\pi$. 

For any given integer $n\in \mathbb N$ let
\[ 
[n]:= \{1,\ldots,n\};\quad\quad
[[n]] := \{1,\overline{1},2,\overline{2},\ldots,n,\overline{n}\}. \]
Notice that the set $[[n]]$ carries a natural involution $\overline{\cdot}: [[n]] \to [[n]]$ defined as $i\mapsto \overline{i}$, $\overline{i}\mapsto i$ for all $i=1,\ldots,n$. In general, given a subset $X\subseteq [[n]]$ we write
\[ \overline{X}:=\{\overline{x} \mid x\in X\}. \]


The finest partition of $[[n]]$ is that in which every block is a singleton.
We will denote it by
\[ \zero := 1 \mid \overline{1} \mid 2 \mid \overline{2} \mid \cdots \mid n \mid \overline n. \]

\begin{definition}
  Let $\Sigma$ be a set (of ``signs''). We say that $\pi$ is a {\em $\Sigma$-labeled partition} of $[[n]]$ if $\pi$ is a partition of $[[n]]$ labeled by $\Sigma$ such that 
  \begin{itemize}
  \item for every $R \in \pi$, $\overline{R} \in \pi$, and 
  \item $S=\overline{S}$ if and only if $S\in \pi$ is signed. 
  \end{itemize}
  We will write this as follows:
  \begin{equation}\label{eq:pi}
  \pi = S_{\sigma_1} \mid \cdots \mid S_{\sigma_l} \mid R_1 \mid \overline{R_1}\vert \cdots \mid R_k \mid \overline{R_k}.
  \end{equation}
  Here the signed blocks are denoted by $S$, and carry the sign $\sigma_i \in \Sigma$ as an index. The unsigned blocks are denoted by $R_i$. We call $\Sigma(\pi):=\{\sigma_1, \ldots, \sigma_l \}$ the \emph{sign} of $\pi$.
\end{definition}

Given a $\Sigma$-labeled partition as in Equation \eqref{eq:pi} and a sign $\sigma \in \Sigma(\pi)$, we write $\pi_{\sigma} := S_{\sigma}$.

\begin{definition}
  Let $\pi$ be a $\Sigma$-labeled partition of $[[n]]$ written as in Equation \eqref{eq:pi} and let
  \[ \psi :=S'_{\tau_1} \mid  \ldots \mid S'_{\tau_m} \mid R'_1 \mid  \overline{R'_1} \mid \cdots \mid R'_q \mid \overline{R'_q}. \]
  We say that $\pi$ is a {\em refinement} of $\psi$ if:
  \begin{itemize}
   \item for every $\sigma \in \Sigma(\pi)$ there exists some $\tau \in \Sigma(\psi)$ such that $\pi_\sigma \subseteq \psi_\tau$;
   \item for every $i\in[k]$ there are two blocks of $\psi$, say $T$ and $\overline T$, such that $R_i\subseteq T$ and $\overline{R_i} \subseteq \overline T$ (notice that $T$ and $\overline T$ can be equal if $T=\overline T = \psi_{\tau_j}$ for some $j\in [m]$).
  \end{itemize}
  We write $\pi \preceq \psi$ if $\pi$ is a refinement of $\psi$. Notice that this defines a partial order relation on the set of all $\Sigma$-labeled partitions of $[[n]]$.
\end{definition}

\begin{remark}
  In the situation of the above definition, if there is no $\Sigma$-labeled partition $\phi$ of $[[n]]$ such that $\pi \prec \phi \prec \psi$ we say that $\psi$ {\em covers} $\pi$ and write $\pi  \precdot \psi$.
\end{remark}

\begin{definition}
We write $\lpns$ for the poset of all $\Sigma$-labeled partitions of $[[n]]$, with a new element $\one$ added to the top, partially ordered by $\preceq$.
Therefore, the element $\one$ covers all the maximal $\Sigma$-labeled partitions.
\end{definition}

\begin{remark}
The poset $\lpns$ is ranked, with rank function given by: $\text{rk}(\pi) = n-k$ for $\pi\neq\one$ (where $k$ is as in Equation \eqref{eq:pi}), and $\text{rk}(\one) = n+1$.
\end{remark}

\subsection{Arrangements associated to root systems and their posets of components}
\label{ss_ars}

Let $G$ denote a dimension one complex algebraic group (i.e., $G$ is either $\mathbb C$, $\mathbb C^*$, or an elliptic curve $E$). We consider arrangements of subvarieties in $G^d$ defined by any set of elements $a_1,\ldots,a_l$ in a lattice (free abelian group) $L$ of rank $d$ as follows: for every $i=1,\ldots,l$ let
\[
H_i:=\{\varphi\in \operatorname{Hom}(L,G) \mid a_i\in \ker\varphi\},
\]
a subset of $\operatorname{Hom}(L,G)\simeq G^d$. The (linear, toric or elliptic) arrangement associated to $a_1,\ldots,a_l$ (and to $G=\mathbb C$, $\mathbb C^*$ or $E$, respectively) is the set
\[
\mathcal A :=\{H_1,\ldots,H_l\}.
\]

A {\em component} of $\mathcal A$ is any connected component of an intersection $\cap \mathcal B$ for some $\mathcal B \subseteq \mathcal A$. The {\em poset of components} of $\mathcal A$ is denoted by $\mathcal C (\mathcal A)$. 

\begin{remark}[Characteristic polynomials]
\label{rem_CPTP} Recall that the {\em characteristic polynomial} of a finite, ranked and bounded-below poset $P$ is
$$
\chi_P(t):=\sum_{p\in P} \mu_P(\hat 0, p) t^{\operatorname{rk}(P)- \operatorname{rk}(p)}.
$$
If $\mathcal A$ is an arrangement, the poset $\mathcal C (\mathcal A)$ is by nature bounded below and ranked by codimension, and its characteristic polynomial 
can be computed as an evaluation of Tutte-type polynomials. In fact, $\mathcal A \subseteq G^d$ can be seen as the quotient of an arrangement of hyperplanes $\mathcal A ^\upharpoonright\subseteq \mathbb C^d$ by a group $\Gamma$ of translations. If $\mathcal A$ is itself a linear arrangement, $\Gamma = \{\operatorname{id}\}$. If $\mathcal A$ is a toric (or elliptic) arrangement, $\mathcal A ^\upharpoonright$ is an infinite, periodic arrangement of hyperplanes, and $\Gamma \simeq \mathbb Z^d$ (or $\mathbb Z^{2d}$). In any case we have an action of the group $\Gamma$ on the semimatroid defined by $\mathcal A^\upharpoonright$, to which in \cite{DR} is associated a two variable {\em Tutte polynomial} $T_{\mathcal A}(x,y)$. Also, recall \cite[Theorem F]{DR}: 
\[
  \chi_{\mathcal C(\mathcal A)} (t) = (-1)^d T_{\mathcal A}(1-t,0).
\]
In particular, when $\mathcal A$ is a linear arrangement $T_{\mathcal A} (x,y)$ equals the classical Tutte polynomial of the associated matroid; when $\mathcal A$ is a toric arrangement, then $T_{\mathcal A} (x,y)$ is Moci's arithmetic Tutte polynomial \cite{M}. 
\end{remark}

A distinguished class of examples arises from root systems of type $A$, $B$, $C$ or $D$. To any such root system we associate a (linear, toric or elliptic) arrangement by taking $L$ equal to the coroot lattice and letting $a_1,\ldots,a_l$ be a set of positive roots.

\begin{theorem}[Barcelo-Ihrig {\cite[Theorem 4.1]{barcelo1999lattices}}, Bibby {\cite[Theorem 3.3]{Bibby}}] 
\label{thm_bibby}
The posets of components of arrangements associated to root systems have the following form, where $|\Sigma|=1$ in the linear case, $|\Sigma|=2$ in the toric case and $|\Sigma|=4$ in the elliptic case.

\medskip
\begin{tabular}{lp{0.8\textwidth}}
  {\bf Type $A_n$:} & $\mathcal C (\mathcal A)$ is isomorphic to the subposet of $\lpns\setminus\{\one\}$ consisting of all elements of the form $R_1 \mid \overline{R_1} \mid \cdots \mid R_k\mid \overline{R_k}$ where $R_1\mid \cdots\mid R_k$ is a partition of $[n]$. Thus, in all cases $\mathcal C(\mathcal A)$ is isomorphic to the classical lattice of all partitions of $[n]$ ordered by refinement. \\[0.1em]
  
  {\bf Type $B_n$:} & Fix some distinguished element $\overline\sigma \in \Sigma$.
  $\mathcal C (\mathcal A)$ is isomorphic to the subposet of $\lpns\setminus\{\one\}$ consisting of all elements $x\in \lpns\setminus\{\one\}$ such that $\vert x_{\sigma} \vert\neq 2$ whenever $\sigma\neq \overline\sigma$. \\[0.1em]
  
  {\bf Type $C_n$:} & $\mathcal C (\mathcal A)$ is isomorphic to $\lpns \setminus\{\one\}$. \\[0.1em]
  
  {\bf Type $D_n$:} & $\mathcal C (\mathcal A)$ is isomorphic to the subposet of $\lpns\setminus\{\one\}$ consisting of all elements $x\in \lpns\setminus\{\one\}$ such that $\vert x_{\sigma} \vert\neq 2$ for all $\sigma\in \Sigma$.
\end{tabular}
\end{theorem}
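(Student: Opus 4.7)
The plan is to invoke Bibby's theorem to identify each $\mathcal{C}(\mathcal{A})$ with a concrete subposet of $\lpns$, then to build an EL-labeling of $\lpns$ whose restriction is an EL-labeling of each of these subposets, and finally to derive the topology and the enumeration. By Theorem \ref{thm_bibby}, the poset $\mathcal{C}(\mathcal{A})$ sits as a specific subposet of $\lpns \setminus \{\one\}$, with $|\Sigma| \in \{1,2,4\}$ according to the linear, toric, or elliptic case; I would adjoin a $\one$ on top where needed so that the bounded poset under consideration is a full subposet of $\lpns$.

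The first main step is to construct an EL-labeling $\lambda$ of $\lpns$. Each cover $\pi \precdot \pi'$ in $\lpns$ performs a ``minimal merger'': two unsigned blocks $R,R'$ of $\pi$ merge together with their conjugates, or an unsigned conjugate pair $\{R,\overline R\}$ is absorbed into a signed block (or into each other) and thereby receives some sign $\sigma \in \Sigma$, or two signed blocks merge; in addition, the cover to $\one$ is labeled by a dedicated maximal symbol. I would set $\lambda(\pi \precdot \pi')$ to be a pair whose first coordinate extends Bj\"orner's classical partition-lattice label $\max(B \cup B')$ to this signed context, and whose second coordinate records the new sign whenever one is created; the codomain is totally ordered lexicographically, with the ordering between second coordinates arranged so that the greedy selection of smallest labels traces out exactly one increasing chain in each interval $[\pi,\psi]$. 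The EL-axiom is then verified by showing, by induction on rank, that this greedy chain is the unique strictly increasing maximal chain of $[\pi,\psi]$ and that it is strictly lex-smaller than every other maximal chain; both statements follow from the ``rising-maxima'' phenomenon familiar from the partition lattice.

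The second main step is to show that the induced labeling on each of the four subposets singled out by Theorem \ref{thm_bibby} is again an EL-labeling. Concretely, the defining conditions (only unsigned blocks for type $A_n$; forbidden block-sizes of $2$ for all but one sign in $B_n$; no restriction for $C_n$; no block of size $2$ carrying any sign for $D_n$) must be preserved along the greedy increasing chain between any two elements of the subposet, so that intervals of the subposet coincide with the corresponding intervals of $\lpns$. This closure-under-the-greedy-rule -- verified case-by-case by tracking how block sizes and signs evolve along a single cover -- is the main obstacle of the proof, since in general EL-labelings do not restrict to EL-labelings of induced subposets; it is the reason behind the otherwise unmotivated choices in the construction of $\lambda$.

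Once restricted EL-shellability is established, standard EL-theory yields that the reduced order complex of $\overline{\mathcal{C}(\mathcal{A})}$ is a wedge of spheres of the asserted dimension, one sphere for each strictly decreasing maximal chain. The final task is to enumerate those chains, which is carried out in Section \ref{sec_sph} via the bijection between decreasing chains and $r$-flourishing $q$-blooming trees of Definitions \ref{def_flourishing} and \ref{def_blooming}. The general count of Theorem \ref{thm:decreasing-chains}, specialized to $|\Sigma| \in \{1,2,4\}$ and to the defining conditions of each Weyl type, reduces to double-factorial identities yielding the closed expressions in Table \ref{table:num-spheres}.
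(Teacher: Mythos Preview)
Your proposal does not address the stated theorem. Theorem~\ref{thm_bibby} is the classification result identifying $\mathcal C(\mathcal A)$ with the listed subposets of $\lpns$; it is quoted from Barcelo--Ihrig and Bibby and is not proved in this paper at all. What you have written is instead an outline of the paper's \emph{main} theorem (EL-shellability of the posets of components and the sphere count), and your very first sentence already \emph{invokes} Theorem~\ref{thm_bibby} as an input rather than establishing it. Nothing in your text explains why the connected components of intersections in $G^n$ for the root systems $A_n$, $B_n$, $C_n$, $D_n$ are parametrized by the indicated labeled partitions, which is the actual content of the statement.

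If your intent was to sketch the main theorem, then your outline is broadly in line with the paper's strategy (construct an EL-labeling on $\lpns$, check it restricts to the type subposets via a closure-under-greedy-chain argument, then count decreasing chains), though your description of $\lambda$ is vaguer than the paper's Definition~\ref{def_lambda} and omits the three-way coherent/non-coherent/signed distinction that drives the proof of Theorem~\ref{thm_main}. But as a proof of Theorem~\ref{thm_bibby} it is simply off target.
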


Our results will apply to the following more general families of posets.

\begin{definition}\label{def_gen} For any $n\in \mathbb N$ and any finite set of signs $\Sigma$ with a distinguished element $\overline\sigma \in \Sigma$, the poset of $\Sigma$-signed partitions of type $A_n$, $B_n$, $C_n$ or $D_n$ is the subposet of $\lpns$ satisfying the corresponding condition in Theorem \ref{thm_bibby}.
\end{definition}

\begin{remark}
  All the subposets in Theorem \ref{thm_bibby} and Definition \ref{def_gen} are ranked, with rank function induced by $\lpns$.
\end{remark}

\begin{remark}\label{rem_CP}
When $\mathcal A$ is an arrangement of hyperplanes, the characteristic polynomial of $\mathcal C (\mathcal A)$
 is known classically 
 \cite[Definition 2.52, Theorem 4.137, Corollary 6.62]{OrlikTerao}).  In the toric case, Ardila-Castillo-Henley \cite{ACH} gave formulas for the arithmetic Tutte polynomials of the associated arrangements, from which in principle the characteristic polynomial can be computed. In the elliptic case no explicit formula is known to us.
 \end{remark}

\section{Shellability of posets and subposets}\label{sec_she}

\subsection{Shellings}
We refer to \cite{BW1} for basics on the notion of shellings of simplicial complexes. Here we only recall that, from a topological point of view, a shellable simplicial complex has the homotopy type of a wedge of spheres. When the complex at hand is the order complex of a poset, the following theorem gives an interpretation of the sphere count in terms of the poset's characteristic polynomial. 

\begin{theorem}\label{thm_CP}
Let $P$ be a bounded and ranked poset and suppose that $\Delta(\overline P)$ is shellable, where $\overline P = P \setminus \{ \zero, \one \}$.
Then $\Delta(\overline P)$ is a wedge of $(-1)^d\chi_P(0)$ spheres of dimension $\operatorname{rk}(P)-2$, where $\chi_P$ denotes the characteristic polynomial of $P$.
\end{theorem}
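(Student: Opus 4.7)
The plan is to assemble three well-known ingredients: the topological consequence of shellability, Philip Hall's formula relating the Möbius function to the reduced Euler characteristic of the order complex, and the fact that $\chi_P(0)=\mu_P(\hat 0,\hat 1)$ when $P$ is bounded.

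First I would observe that, since $P$ is bounded and ranked of rank $d:=\operatorname{rk}(P)$, every maximal chain in $P$ has length $d$, so every maximal chain in $\overline P$ has length $d-2$. Hence $\Delta(\overline P)$ is pure of dimension $d-2$. Combined with the assumption that $\Delta(\overline P)$ is shellable, the standard theorem of Björner (see \cite{BW1}) yields that $\Delta(\overline P)$ has the homotopy type of a wedge of $k$ spheres, all of dimension $d-2$, where $k$ is the number of ``homology facets'' of the shelling. In particular the reduced Euler characteristic satisfies
\[
\tilde\chi(\Delta(\overline P)) = k\cdot(-1)^{d-2} = (-1)^{d}\, k.
\]

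Next I would invoke Philip Hall's theorem: for any bounded poset $P$,
\[
\mu_P(\hat 0,\hat 1) = \tilde\chi(\Delta(\overline P)).
\]
Finally, inspecting the definition
\[
\chi_P(t)=\sum_{p\in P}\mu_P(\hat 0,p)\,t^{\operatorname{rk}(P)-\operatorname{rk}(p)},
\]
the only term that survives the evaluation at $t=0$ is the one with $\operatorname{rk}(p)=\operatorname{rk}(P)$, namely $p=\hat 1$, giving $\chi_P(0)=\mu_P(\hat 0,\hat 1)$. Putting the three identities together yields $k=(-1)^d\chi_P(0)$, as required.

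No step is a real obstacle: the proof is simply a matter of aligning the definitions and citing the two classical theorems (shellable $\Rightarrow$ wedge of pure-dimensional spheres, and Philip Hall). The only mildly delicate point worth double-checking is the sign bookkeeping, specifically that $(-1)^{d-2}=(-1)^d$ so that $k=(-1)^d\mu_P(\hat 0,\hat 1)=(-1)^d\chi_P(0)$, and that purity (needed so that all spheres have the same dimension) is guaranteed by the hypothesis that $P$ is both bounded and ranked.
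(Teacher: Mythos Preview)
Your argument is correct and matches the paper's own proof almost verbatim: both establish purity of $\Delta(\overline P)$ from the ranked hypothesis, deduce a wedge of equidimensional spheres from shellability, and then combine Hall's theorem with the observation $\chi_P(0)=\mu_P(\hat 0,\hat 1)$ to read off the sphere count. The only cosmetic difference is that the paper phrases Hall's theorem via the unreduced Euler characteristic (picking up a ``$+1$'' on each side) and sets $d:=\operatorname{rk}(P)-2$ rather than $d:=\operatorname{rk}(P)$, which of course gives the same sign.
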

\begin{proof}
If $P$ is ranked, the complex $\Delta(\overline P)$ is of pure dimension $d:=\operatorname{rk}(P)-2$. Moreover, any shellable  complex of pure dimension $d$ decomposes in a wedge of spheres all of which have dimension $d$.
Therefore, the Euler characteristic of $\Delta(\overline P)$ is
$
1 + (-1)^d k
$ 
where $k$ is the number of spheres in the wedge. A theorem of P.\ Hall \cite[Theorem 3.8.6]{Stanley} states that the Euler characteristic of $\Delta(\overline P)$ equals $ \mu_P(\hat 0,\hat 1) + 1$. But by definition of the characteristic polynomial one sees $ \mu_P(\hat 0,\hat 1)=\chi_P(0)$. Therefore, $\chi_P(0)+1=1 + (-1)^d k$, thus $k=(-1)^d\chi_P(0)$.
\end{proof}

\subsection{EL-labelings}

We follow \cite[Section 5]{BW1} for the definition of EL-labelings.
Let $P$ be a bounded poset, where the top and bottom elements are denoted by $\one$ and $\zero$, respectively.
In addition, denote by $\E(P)$ the set of edges of the Hasse diagram of $P$, and by $\E_{xy}$ the edge between $x$ and $y$ whenever $x \precdot y$.

An \emph{edge labeling} of $P$ is a map $\lambda\colon \E(P) \to \Lambda$, where $\Lambda$ is some poset.
Given an edge labeling $\lambda$, each maximal chain $c = (x \precdot z_1 \precdot \dots \precdot z_t \precdot y)$ between any two elements $x \preceq y$ has an associated word
\[ \lambda(c) = \lambda(\E_{x z_1}) \, \lambda(\E_{z_1 z_2}) \,\cdots\, \lambda(\E_{z_t y}). \]
We say that the chain $c$ is \emph{increasing} if the associated word $\lambda(c)$ is strictly increasing, and \emph{decreasing} if the associated word is weakly decreasing.
Maximal chains in a fixed interval $[x,y]\subseteq P$ can be compared ``lexicographically'' (i.e.\ by using the lexicographic order on the corresponding words).

\begin{definition}
  Let $P$ be a bounded poset.
  An \emph{edge-lexicographical labeling} (or simply \emph{EL-labeling}) of $P$ is an edge labeling such that in each closed interval $[x,y] \subseteq P$ there is a unique increasing maximal chain which lexicographically precedes all other maximal chains of $[x,y]$.
\end{definition}

The main motivation for introducing EL-labelings of posets is given by the following theorem.

\begin{theorem}[{\cite[Theorem 5.8]{BW1}}]
  Let $P$ be a bounded poset with an EL-labeling.
  Then the lexicographic order of the maximal chains of $P$ is a shelling of the order complex $\Delta(P)$.
  Moreover, the corresponding order of the maximal chains of $\overline P$ is a shelling of $\Delta(\overline P)$.
\end{theorem}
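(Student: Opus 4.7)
The plan is to verify the equivalent shellability criterion in terms of facet intersections: for any two maximal chains $c \prec_{\text{lex}} c'$ of $P$, I would exhibit a maximal chain $c''$ with $c'' \prec_{\text{lex}} c'$ such that $c \cap c' \subseteq c'' \cap c'$ and $c''$ differs from $c'$ in exactly one element (so that $c'' \cap c'$ has codimension one in $c'$). Since the facets of $\Delta(P)$ are precisely the maximal chains of $P$, and since $\hat 0$ and $\hat 1$ belong to every such chain, once this is established both shellability claims follow: the statement for $\Delta(P)$ directly, and the one for $\Delta(\overline P)$ by restricting the lex order to the complex obtained by deleting the common vertices $\hat 0, \hat 1$.

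To build $c''$, I would first localize where $c$ and $c'$ differ. Let $u$ be the top element of the common initial prefix of $c$ and $c'$, and let $v$ be the least element above $u$ on $c'$ that also lies on $c$ (which exists, since $\hat 1$ qualifies). By construction $c$ and $c'$ share only the endpoints of $[u,v]$ and coincide outside of it, so the strict inequality $c \prec_{\text{lex}} c'$ restricts to $c|_{[u,v]} \prec_{\text{lex}} c'|_{[u,v]}$. Hence $c'|_{[u,v]}$ is not the lex-minimum maximal chain in $[u,v]$; by the EL-property this minimum is the \emph{unique} strictly increasing chain, so the label word of $c'|_{[u,v]}$ fails to strictly increase at some position. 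That is, there are three consecutive elements $w \precdot w' \precdot w''$ along $c'$ inside $[u,v]$ with $\lambda(\E_{ww'}) \geq \lambda(\E_{w'w''})$. Applying the EL-property to the short interval $[w,w'']$ yields the unique increasing chain $w \precdot y \precdot w''$ with $y \neq w'$; I would then define $c''$ by replacing $w'$ with $y$ in $c'$.

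The verification that $c''$ satisfies the three requirements is a short check. It differs from $c'$ in a single interior vertex, giving the codimension condition. Lex-minimality of the increasing chain in $[w,w'']$ forces $\lambda(\E_{wy}) < \lambda(\E_{ww'})$, so the label word of $c''$ drops strictly below that of $c'$ at position $w$ and agrees elsewhere, yielding $c'' \prec_{\text{lex}} c'$. Finally, $w'$ is an interior element of $c'|_{[u,v]}$, which by the minimality of $[u,v]$ meets $c$ only at the endpoints $u$ and $v$; hence $w' \notin c$, and so $c \cap c' \subseteq c'' \cap c'$. The main subtle point I would scrutinize is the descent existence together with the strict inequality $\lambda(\E_{wy}) < \lambda(\E_{ww'})$: both rest on the full EL-property (uniqueness of the increasing chain as the lex-minimum), combined with the careful choice of $[u,v]$ which is what turns the lex comparison of $c'$ against $c$ into a comparison against a genuine lex-minimum.
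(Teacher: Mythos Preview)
The paper does not supply a proof of this theorem; it is quoted as \cite[Theorem~5.8]{BW1}, a foundational result of Bj\"orner and Wachs, and all subsequent shellability claims in the paper rest on it as a black box. So your proposal is not being compared against anything in the paper --- you have essentially reconstructed the standard Bj\"orner--Wachs argument.

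That said, there is one genuine slip in your write-up. You assert that $c$ and $c'$ ``coincide outside of $[u,v]$''. They coincide below $u$ by the definition of $u$, but there is no reason they should agree above $v$: your $v$ is merely the \emph{first} element of $c'$ above $u$ lying on $c$, and the two chains may well diverge again afterwards. You invoke this (false) coincidence to justify that $c \prec_{\text{lex}} c'$ restricts to $c|_{[u,v]} \prec_{\text{lex}} c'|_{[u,v]}$, which is the hinge of your descent argument. In the \emph{ranked} case --- which is all the present paper ever needs --- the restriction claim is nonetheless true and easy: since $\lambda(c)$ and $\lambda(c')$ agree up to $u$ and all maximal chains in $[u,v]$ have equal length, the first position where the full label words differ lies inside $[u,v]$, and the comparison there is exactly the comparison of the restricted words. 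In the general (non-graded) setting covered by the theorem as stated, one has to do a little more to exclude the possibility that the label word of the increasing chain $c'|_{[u,v]}$ is a proper prefix of that of $c|_{[u,v]}$; this is where the original Bj\"orner--Wachs proof is slightly more careful. The remainder of your argument --- locating a descent inside $(u,v)$, swapping via the EL property on $[w,w'']$, and checking the three shelling conditions --- is correct; in particular, the inclusion $c \cap c' \subseteq c'' \cap c'$ only needs that the interior of $c'|_{[u,v]}$ misses $c$, which you did establish.
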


The shelling induced by an EL-labeling is called an \emph{EL-shelling}. A bounded poset that admits an EL-labeling is said to be \emph{EL-shellable}.

\begin{remark}\label{rem_spherecount}
If $P$ is an EL-shellable poset, the homotopy type of $\Delta(\overline{P})$ is that of a wedge of spheres indexed over the maximal chains of $P$ with decreasing label. More precisely, if $D_P$ is the set of maximal chains of $P$ with decreasing labels,
\[
  \Delta(\overline{P}) \simeq \bigvee_{c\in D_P} S^{\vert c \vert -2}.
\]
Notice that, if $P$ is ranked, all maximal chains have the same cardinality.
\end{remark}

\subsection{Products of posets}\label{sec_prod}
Let $P_1$ and $P_2$ be bounded posets that admit EL-labelings $\lambda_1\colon \E(P_1)\to \Lambda_1$ and $\lambda_2\colon \E(P_2) \to \Lambda_2$, respectively. Assume that $\Lambda_1$ and $\Lambda_2$ are disjoint and totally ordered.
Let $\lambda\colon \E(P_1 \times P_2) \to \Lambda_1 \cup \Lambda_2$ be the edge labeling of $P_1\times P_2$ defined as follows:
\begin{IEEEeqnarray*}{lCl}
  \lambda(\E_{(a,b)(c,b)}) &=& \lambda_1(\E_{ac}), \\
  \lambda(\E_{(a,b)(a,d)}) &=& \lambda_2(\E_{bd}).
\end{IEEEeqnarray*}

\begin{theorem}[{\cite[Proposition 10.15]{BW2}}]\label{thm_product}
  Fix any shuffle of the total orders on $\Lambda_1$ and $\Lambda_2$, to get a total order of $\Lambda_1 \cup \Lambda_2$.
  Then the product edge labeling $\lambda$ defined above is an EL-labeling.
\end{theorem}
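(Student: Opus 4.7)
The plan is to exploit the fact that every interval in $P_1 \times P_2$ factors as a product $[a,a'] \times [b,b']$ of intervals in the two factors, and a covering relation in a product changes exactly one coordinate. Hence any maximal chain in such an interval is determined by a pair of maximal chains $m_1$ in $[a,a']$ and $m_2$ in $[b,b']$ together with a shuffle of their edges, and the resulting label word under $\lambda$ is exactly the corresponding interleaving of the label words of $m_1$ under $\lambda_1$ and $m_2$ under $\lambda_2$.

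\emph{Construction and uniqueness of the increasing chain.} By the EL-property of $\lambda_i$, let $\alpha_1 < \cdots < \alpha_s$ be the labels of the unique increasing maximal chain in $[a,a']$ and $\beta_1 < \cdots < \beta_t$ the corresponding sequence for $[b,b']$. Since the chosen shuffle makes $\Lambda_1 \cup \Lambda_2$ totally ordered and the $\Lambda_i$ are disjoint, there is a unique increasing merge $\gamma_1 < \cdots < \gamma_{s+t}$ of the two sequences; this merge prescribes a unique interleaving of $m_1$ and $m_2$, yielding a maximal chain $m$ in $[(a,b),(a',b')]$ whose $\lambda$-word is strictly increasing. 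Conversely, any strictly increasing maximal chain in the product interval has its $\Lambda_1$- and $\Lambda_2$-subwords strictly increasing and equal to the label words of maximal chains in the two factor intervals; by EL-ness of each $\lambda_i$ these subwords must be $(\alpha_i)$ and $(\beta_j)$, and the global strict monotonicity forces the specific interleaving. So $m$ is unique.

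\emph{Lexicographic minimality.} I would proceed by induction on $\operatorname{rk}[a,a']+\operatorname{rk}[b,b']$. A standard consequence of the EL-property for $\lambda_i$ is that, on any interval $[u,v]$, the first label of the unique increasing chain equals $\min\{\lambda_i(\E_{u,u'}) : u \precdot u' \preceq v\}$: otherwise, a cover with a smaller label would initiate a lex-smaller maximal chain, contradicting minimality. Translated to the product, $\gamma_1 = \min(\alpha_1,\beta_1)$ is the minimum $\lambda$-label on any cover out of $(a,b)$ that remains inside $[(a,b),(a',b')]$. Any competing maximal chain therefore has first label $\geq \gamma_1$; when equality holds, its first cover lies in the same factor as that of $m$, after which one is reduced to a product interval of strictly smaller rank-sum, and the induction hypothesis closes the argument.

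The main obstacle is the final subtlety in that induction: a competitor chain may share its first label with $m$ yet traverse a different cover element, which is possible if some factor EL-labeling carries repeated labels on distinct covers out of a single element. Handling this cleanly requires carefully tracking which factor subinterval the competitor enters, verifying via the inductive hypothesis that the tail of $m$ remains the unique increasing and lex-first maximal chain in that subinterval, and concluding that the global lex comparison still favors $m$.
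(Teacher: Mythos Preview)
The paper does not prove this theorem at all; it is quoted as \cite[Proposition~10.15]{BW2} and used as a black box. So there is no paper proof to compare against, and your attempt must be judged on its own.

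Your factorization of intervals and the construction and uniqueness of the increasing chain are correct and standard: disjointness of $\Lambda_1$ and $\Lambda_2$ forces a unique increasing merge of the two factor words, and any increasing chain must project to the unique increasing chains in the factors.

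The lexicographic-minimality argument, however, is genuinely incomplete, and the ``obstacle'' you flag is a real gap in the scheme you chose. Your inductive hypothesis says that on any smaller product interval the increasing chain is lex-first \emph{within that interval}. But when a competitor $m'$ shares the first label $\gamma_1$ with $m$ yet steps to a different cover $(u_2,b)\neq(u_1,b)$, the tails of $m$ and $m'$ live in \emph{different} product intervals $[(u_1,b),(a',b')]$ and $[(u_2,b),(a',b')]$. Knowing that the tail of $m$ is lex-first in the first interval tells you nothing about its comparison with a chain in the second, so the inductive hypothesis as stated does not close the case. Your final sentence (``verifying via the inductive hypothesis that the tail of $m$ remains \ldots\ lex-first in that subinterval'') does not make sense, because the tail of $m$ is not in ``that'' subinterval.

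The clean fix is to abandon the interval-based induction and argue purely at the level of label words. Let $m_1', m_2'$ be the projections of $m'$; the EL-property in each factor gives $\lambda_1(m_1)\le_{\mathrm{lex}}\lambda_1(m_1')$ and $\lambda_2(m_2)\le_{\mathrm{lex}}\lambda_2(m_2')$. Now prove by induction on total length that if $w_1,w_2$ are strictly increasing words in the disjoint alphabets $\Lambda_1,\Lambda_2$ with $w_i\le_{\mathrm{lex}} w_i'$, then the increasing merge of $w_1,w_2$ is lex-$\le$ every shuffle of $w_1',w_2'$. The first letter of any shuffle of $w_1',w_2'$ is at least $\min(\alpha_1,\beta_1)$; if equality holds, disjointness of the alphabets forces the first letter to come from the \emph{same} factor as in the merge, the first letters of that factor's words agree, and one strips them off and recurses. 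This avoids any reference to cover elements and closes the argument.
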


\subsection{A criterion for subposets} 
When $P$ is a shellable bounded poset, certain subposets $Q\subseteq P$ are also shellable through some induced labeling.
A general criterion is given by \cite[Theorem 10.2]{BW2}.
Here we only state and prove the following simple and more specific criterion.

\begin{lemma}\label{lem_sub}
  Let $P$ be a bounded and ranked poset, with an EL-labeling $\lambda$.
  Let $Q\subseteq P$ be a ranked subposet of $P$ containing $\zero$ and $\one$, with rank function given by restricting the rank function of $P$.
  Then, if for all $x \preceq y$ in $Q$ the unique increasing maximal chain in $[x,y] \subseteq P$ is also contained in $Q$, the edge labeling $\lambda|_{\E(Q)}$ is an EL-labeling of $Q$.
\end{lemma}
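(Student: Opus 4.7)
The plan is to unpack the hypothesis and observe that, because $Q$ inherits its rank function from $P$, maximal chains of any closed interval in $Q$ are precisely the maximal chains of the corresponding interval in $P$ that happen to lie inside $Q$. Once this is noted, the EL-property for $Q$ follows almost formally from the EL-property for $P$ together with the assumption on increasing chains.

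More concretely, I would first check that $\lambda|_{\E(Q)}$ is well-defined. If $x \precdot y$ in $Q$, then $\operatorname{rk}_Q(y) = \operatorname{rk}_Q(x)+1$, and by hypothesis $\operatorname{rk}_Q$ agrees with $\operatorname{rk}_P$ on $Q$, hence $\operatorname{rk}_P(y) = \operatorname{rk}_P(x)+1$. Combined with $x \prec y$ in $P$, this forces $x \precdot y$ in $P$, so every edge of $Q$ is genuinely an edge of $P$ and can be labeled by $\lambda$. The same rank argument shows that a chain $x = z_0 \precdot z_1 \precdot \cdots \precdot z_t = y$ in $Q$ is maximal in $[x,y]_Q$ if and only if $t = \operatorname{rk}_P(y)-\operatorname{rk}_P(x)$, which is exactly the length of any maximal chain in $[x,y]_P$. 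Hence the maximal chains of $[x,y]_Q$ are precisely those maximal chains of $[x,y]_P$ whose intermediate elements all belong to $Q$, and the label word computed in $Q$ coincides with the one computed in $P$.

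Now fix $x \preceq y$ in $Q$. By hypothesis, the unique increasing maximal chain $c_0$ of $[x,y]_P$ lies entirely in $Q$, so $c_0$ is an increasing maximal chain of $[x,y]_Q$. For uniqueness, any increasing maximal chain of $[x,y]_Q$ is in particular an increasing maximal chain of $[x,y]_P$ with the same label word, so by the EL-property of $P$ it must coincide with $c_0$. For lexicographic minimality, $c_0$ already precedes every maximal chain of $[x,y]_P$ lexicographically, and the maximal chains of $[x,y]_Q$ form a subset of those; hence $c_0$ lexicographically precedes every other maximal chain of $[x,y]_Q$ as well. This exhibits $\lambda|_{\E(Q)}$ as an EL-labeling of $Q$.

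The only delicate point is really the equivalence between maximal chains in $Q$ and in $P$, which rests squarely on the assumption that the rank function of $Q$ is inherited from $P$; without this, maximal chains in $Q$ could be shorter than maximal chains in the ambient interval of $P$, and the argument breaks. Once that identification is secured, the rest is a direct transfer of the EL-labeling property from $P$ to $Q$.
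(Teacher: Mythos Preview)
Your proof is correct and follows essentially the same approach as the paper's own argument: use the rank hypothesis to identify $\E(Q)$ as a subset of $\E(P)$ and maximal chains of $[x,y]_Q$ as maximal chains of $[x,y]_P$, then transfer existence, uniqueness, and lexicographic minimality of the increasing chain directly from $P$ to $Q$. Your write-up is somewhat more explicit (spelling out the rank argument for edges and chains, and separating the uniqueness and minimality verifications), but the underlying reasoning is identical.
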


\begin{proof}
  The hypothesis on the ranks ensures that $\E(Q) \subseteq \E(P)$, so it makes sense to restrict $\lambda$ to $\E(Q)$.
  
  Let $x \preceq y$ be elements of $Q$.
  Denote by $[x,y]_Q$ the interval bounded by $x$ and $y$ in $Q$, and by $[x,y]_P$ the interval bounded by $x$ and $y$ in $P$.
  Since $\lambda$ is an EL-labeling of $P$, there exists a unique increasing maximal chain $c$ in $[x,y]_P$: this chain belongs to $[x,y]_Q$ by hypothesis.
  Maximal chains of $[x,y]_Q$ are also maximal chains of $[x,y]_P$ by the hypotesis on the ranks.
  Since $c$ lexicographically precedes all other maximal chains of $[x,y]_P$, in particular it precedes all other maximal chains of $[x,y]_Q$.
\end{proof}

%

\section{Shellability of posets of $\Sigma$-labeled partitions}\label{sec_res}

\subsection{EL-labeling for the poset of $\Sigma$-labeled partitions of $[[n]]$.}

For a block $R$ of some $\Sigma$-labeled partition, define the \emph{representative} of $R$ as the minimum element $i \in [n]$ such that $i\in R$ or $\bar i \in R$.
Denote by $r(R)$ the representative of $R$.
Notice that $R$ and $\overline R$ share the same representative, and that the representative of $R$ does not necessarily belong to $R$.

\smallskip

We call a block $R \subseteq [[n]]$ \emph{normalized} if the representative of $R$ belongs to $R$. For instance, $\{2, \bar 4, 5 \}$ is normalized and $\{\bar 2, 4, 5\}$ is not.
Notice that exactly one of $R$ and $\overline R$ is normalized, whenever $R$ is an unsigned block.

\smallskip

An edge $\E_{xy}$ of the Hasse diagram of $\lpns$ with $y \neq \one$ is called:
\begin{itemize}
 \item \emph{of sign $\sigma$} if $x_\sigma \subsetneq y_\sigma$ for a $\sigma\in \Sigma$;
 \item \emph{coherent} if $R \cup R' \in y$ for some normalized unsigned blocks $R, R' \in x$;
 \item \emph{non-coherent} if $R \cup \overline{R'} \in y$ for some normalized unsigned blocks $R, R' \in x$.
\end{itemize}
If $y \neq \one$, the edge $\E_{xy}$ is of exactly one of these three types.
We also say that $\E_{xy}$ is \emph{unsigned} if it is either coherent or non-coherent, and that it is \emph{signed} otherwise.

\begin{definition}\label{def_lambda}
  Given a total ordering $<$ of $\Sigma$ define the following edge labeling $\lambda$ of $\lpns$.
  \begin{itemize}
    \item Let $\E_{xy}$ be an unsigned edge. Let $R,R'\in x$ such that $R\cup R' \in y$, as above, and let $i$ and $j$ be the representatives of $R$ and $R'$. Then:
    \[ \lambda(\E_{xy}) =
    \begin{cases}
      (0, \max(i,j)) & \text{if $\E_{xy}$ is coherent}; \\
      (2, \min(i,j)) & \text{if $\E_{xy}$ is non-coherent}.
    \end{cases}
    \]
    
    \item Let $\E_{xy}$ an edge of sign $\sigma$. Then:
    \[ \lambda(\E_{xy}) =
    \begin{cases}
      (1,\vert \Sigma(x)_{\leq \sigma}\vert) & \text{if $\sigma \in \Sigma(x)$ }; \\
      (1,\vert \Sigma_{\leq \sigma}\cup  \Sigma(x) \vert ) & \text{otherwise}.
    \end{cases}
    \]

    \item For $x \precdot \one$, let
    \[
      \lambda(\E_{x \one}) = (1,2).
    \]
  \end{itemize}
  Labels are ordered lexicographically.
  \label{def:EL-labeling}
\end{definition}

\begin{theorem}\label{thm_main}
  The labeling $\lambda$ of Definition \ref{def:EL-labeling} is an EL-labeling of $\lpns$.
\end{theorem}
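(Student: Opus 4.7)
The plan is to prove that $\lambda$ is an EL-labeling by exhibiting, in every closed interval $[x, y] \subseteq \lpns$, an explicit candidate $c^*$ for the unique strictly increasing maximal chain and checking that $c^*$ lexicographically precedes all other maximal chains. I would split the argument according to whether $y \neq \one$ or $y = \one$.

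For $y \neq \one$ I would first describe the local structure of $[x, y]$: each pair $\{B, \overline B\}$ of unsigned blocks of $y$ receives a collection of unsigned pairs of $x$ that split into ``positives'' (normalized representative in $B$) and ``negatives''; each signed block $S_\sigma$ of $y$ contains at most one signed block of $x$ of the same sign, together with some unsigned pairs of $x$. Every cover in a maximal chain of $[x, y]$ is either coherent, signed, or non-coherent (distinguished by the first coordinate of its label), so I would define $c^*$ by scheduling all coherent merges first (Phase 0), then all signed operations (Phase 1), then all non-coherent merges (Phase 2), picking the smallest available label at each step. Concretely, Phase 0 merges positives together and negatives together inside each $\{B, \overline B\}$, and merges all unsigned pairs of $x$ inside each signed block of $y$; Phase 1 performs one operation per signed block of $y$ that has something to absorb, either extending a pre-existing $x_\sigma$ or creating a new signed block from the pair produced in Phase 0; Phase 2 performs one non-coherent merge per pair $\{B, \overline B\}$ with both sides populated.

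Next I would verify that $c^*$ is well-defined and that its label sequence is strictly increasing. Within Phase 0, two distinct available merges cannot share the same value of $\max(r, r')$, since such a tie would require both merges to involve the block of that representative, impossible simultaneously; the analogous statement with $\min(r, r')$ handles Phase 2. Within Phase 1 I would show that the unique strictly increasing schedule is to perform all extensions first in increasing sign order (producing labels bounded above by $|\Sigma(x)|$), and then all creations in increasing sign order (producing labels at least $|\Sigma(x)|+1$, strictly increasing between consecutive creations by a short calculation showing that the label of a creation of $\sigma$ equals $|\Sigma_{\leq\sigma} \cup \Sigma(x)|$). Since all Phase $i$ labels have first coordinate $i$, the three phases concatenate into a strictly increasing word. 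Lexicographic minimality and uniqueness of the increasing chain follow by comparing $c^*$ with any other maximal chain $c$: at the first edge where $c$ diverges from $c^*$, the greedy construction forces $c$ to carry a strictly larger label, and any chain deviating from the Phase 0 / Phase 1 / Phase 2 ordering either repeats a label later or must perform an omitted step with a label already used.

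For $y = \one$ the last edge carries the constant label $(1, 2)$, so every preceding label must be less than $(1, 2)$, i.e., of the form $(0, k)$ or equal to $(1, 1)$. This rules out non-coherent merges and all signed operations except extending the $(\min \Sigma(x))$-signed block (or, when $\Sigma(x) = \emptyset$, creating the $(\min \Sigma)$-signed block). The chain $c^*$ thus performs all coherent merges greedily, consolidating the unsigned pairs of $x$ into a single pair; then a single signed operation of label $(1, 1)$ that consolidates this pair into a signed block; and finally jumps to $\one$. The main obstacle in the whole argument is the correctness of Phase 1 for $y \neq \one$: the labels $(1, k)$ depend on the evolving set $\Sigma(x')$ along the chain, and the delicate interleaving of extensions and creations must be analyzed carefully to confirm strict monotonicity and to rule out any alternative increasing schedule.
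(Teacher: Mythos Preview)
Your approach is correct and shares the same core idea as the paper's proof: construct the candidate increasing chain by scheduling coherent merges, then signed operations (extensions before creations, each in increasing sign order), then non-coherent merges, and verify uniqueness and lexicographic minimality. The paper organizes the argument differently, however. Rather than treating a general interval $[x,y]$ with $y\neq\one$ monolithically via your three phases, it first isolates two ``atomic'' interval types---$[x,y]$ with $y=R\mid\overline R$ (only coherent and non-coherent edges) and $[x,y]$ with $y$ consisting of signed blocks plus singletons (only coherent and signed edges)---proves the EL property for each, and then invokes the product EL-labeling theorem (Theorem~\ref{thm_product}) to conclude for arbitrary $y\neq\one$, since any such interval factors as a product of these atomic intervals with disjoint label sets. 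Your direct route avoids the product lemma at the cost of more bookkeeping: the Phase~1 uniqueness argument (which you rightly flag as the main obstacle) unfolds into exactly the case analysis the paper carries out in its Part~(4), and the interleaving of labels from different components must be checked by hand rather than read off from the product structure. The paper's decomposition is cleaner and more modular; your approach is more self-contained. For $[x,\one]$ the two proofs are essentially identical.
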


\begin{proof}
  The proof is divided into five parts.
  
  \begin{enumerate}
    \item {\em Intervals $[x,z]$ that contain only coherent edges.} Here $x$ and $z$ must have the same signed part. Thus, every such interval is isomorphic to an interval of the lattice of standard partitions of $[n]\setminus \bigcup_{\sigma\in \Sigma} x_\sigma$. The isomorphism maps an element $x_{\sigma_1} \mid \ldots \mid x_{\sigma_l} \mid R_1 \mid \overline{R_1} \mid \ldots \mid R_k \mid \overline{R_k} \in [x,z]$ to $\widetilde{R}_1 \mid \ldots \mid \widetilde{R}_k$ where, for all $i$, $\widetilde{R}_i:= (R_i\cup \overline{R_i}) \cap[n]$. This isomorphism maps the labeling $\lambda$ to one of the standard EL-labelings of the partition lattice \cite[Example 2.9]{Bjorner1980}. In particular there is an unique increasing maximal chain from $x$ to $z$.

    \item {\em Intervals of the form $[x, \one]$.}
    Recursively define a maximal chain $c_{x\one}$ in $[x,\one]$ as follows.
    
    \noindent For $x \precdot \one$, define $c_{x\one}=(x \precdot \one)$.
    Here notice that $\lambda(\E_{x\one})=(1,2)$.
    
    \noindent Let $x \prec \one$ be an element of $\lpns$ not covered by $\one$.
    \begin{itemize}
    \item Case 1: there exist unsigned normalized blocks $R \neq R'$ in $x$. Among all such pairs $(R,R')$, choose the (only) one for which $(r(R), r(R'))$ is lexicographically least. Let $z$ be the $\Sigma$-labeled partition obtained from $x$ by replacing $R \mid \overline R \mid R' \mid \overline{R'}$ with $R\cup R'\mid \overline{R} \cup \overline{R'}$. 

    Then set $c_{x\one} = (x \precdot c_{z\one})$.
    Notice that $\mathcal{E}_{xz}$ is coherent and $\lambda(\mathcal{E}_{xz})= (0,r(R'))$.
    
    \item Case 2: no such pair $(R,R')$ exists. Since $x$ is not covered by $\one$, there exists a unique normalized unsigned block $R$ in $x$. Let $z$ be the $\Sigma$-labeled partition obtained from $x$ by labeling $R \cup \overline{R}$ by $\sigma$ where $\sigma$ is:
    \begin{itemize}
      \item $\min_<\Sigma(x)$ if $\Sigma(x) \neq \emptyset $;
      \item $\min_<\Sigma$ if $\Sigma(x) = \emptyset $. 
    \end{itemize} 
    Then set $c_{x\one}=x\precdot c_{z\one}$. 
    Notice that $\mathcal{E}_{xz}$ is of sign $\sigma$ and $\lambda(\mathcal{E}_{xz})=(1,1)$.
    Also, notice that $z \precdot \one$, so $c_{x\one} = (x \precdot z \precdot \one)$.
    \end{itemize}
    
    \noindent We are going to prove that $c_{x\one}$ is the unique increasing maximal chain in $[x,\one]$, and that it is lexicographically least among all the maximal chains in $[x,\one]$.
    
    \begin{itemize}
      \item[$\diamond$] By construction, $c_{x\one}$ is increasing.
      
      \item[$\diamond$] Let $c$ be any increasing maximal chain in $[x,\one]$. We want to show that $c=c_{x\one}$.
      Assume that $x$ is not covered by $\one$ (otherwise the chain is trivial).

      Since the last label of $c$ is $(1,2)$, $c$ does not contain any non-coherent edges, and it contains exactly one signed edge. So $c$ contains: first, coherent edges, with labels of the form $(0, \ast)$; then, one signed edge labeled $(1,1)$; finally, one edge ending in $\one$, labeled $(1,2)$.

      Therefore there exists some $z \in c$ such that $c \cap [x,z]$ only consists of coherent edges and $c \cap [z,\one]$ consists in one signed edge and one edge ending in $\one$. Here $z$ is uniquely determined by $x$, and $c \cap [z,\one]$ is uniquely determined by the increasing property of $c$.

      The interval $[x,z]$ contains only coherent edges, thus by Part (1) it contains a unique increasing maximal chain which must coincide with $c \cap [x,z]$. Therefore there is an unique increasing maximal chain in $[x,\one]$ and $c=c_{x\one}$.     
      \item[$\diamond$] In both Case $1$ and Case $2$ above, $\lambda(\mathcal{E}_{xz})< \lambda(\mathcal{E}_{xz'})$ for every $z'\neq z$ that covers $x$ in $[x,\one]$. Then $c_{x\one}$ is lexicographically least among the maximal chains in $[x,\one]$.
    \end{itemize}

    \item {\em Intervals $[x,y]$ where $y$ is of the form $y= R \mid \overline{R}$.}
    Recursively define a maximal chain $c_{xy}$ in $[x,y]$ as follows.
    
    \noindent For $x=y$, set $c_{yy} = y$.
    Assume then $x \prec y$.
    \begin{itemize}
      \item Case 1: there exist unsigned normalized blocks $T \neq T'$ in $x$ such that $T\cup T'$ is contained in a block of $y$. Choose the unique pair $(T,T')$ of blocks of $x$ such that $(r(T),r(T'))$ is lexicographically least. Let $z$ be the partition obtained from $x$ by replacing $T\mid \overline{T} \mid T' \mid \overline{T'}$ with $T\cup T' \mid \overline{T} \cup \overline{T'}$.
      Then set $c_{xy}= (x \precdot c_{zy})$.
      Notice that $\mathcal{E}_{xz}$ is coherent and $\lambda(\mathcal{E}_{xz})= (0,r(T'))$.
      
      \item Case 2: no such pair $(T,T')$ exists.
      Then $x \precdot y$ and there exist unsigned normalized blocks $Q \neq Q'$ in $x$ such that $Q\cup \overline{Q'}$ coincides with $R$ or $\overline{R}$.
      Then set $c_{xy}= (x \precdot y)$.
      Notice that $\mathcal{E}_{xy}$ is non-coherent and $\lambda(\mathcal{E}_{xy})=(2,r)$, where $r= \min ( r(Q), r(Q') )$.
    \end{itemize}
    
    \noindent We are going to prove that $c_{xy}$ is the unique increasing maximal chain in $[x,y]$, and that it is lexicographically least among all the maximal chains in $[x,y]$.
    \begin{itemize}
      \item[$\diamond$] By construction, $c_{xy}$ is increasing.
      
      \item[$\diamond$] Let $c$ be any increasing maximal chain in $[x,y]$. 
      We want to show that $c=c_{xy}$.

      The chains in $[x,y]$ have no signed edge. Since the label of coherent edges is smaller than the label of non-coherent edges, coherent edges precede non-coherent edges along $c$.
      So there exists an element $z \in c$ such that $c \cap [x,z]$ only consists of coherent edges, and $c \cap [z,y]$ only consists of non-coherent edges.

      If the last edge of $c$ is non-coherent, then its label must be $(2, r(R))$. This is also the minimum possible label for a non-coherent edge in the interval $[x,y]$.
      Therefore, since $c$ is increasing, it contains at most one non-coherent edge.

      Thus $z$ is uniquely determined by $x$ and $y$: if $R$ is the union of the normalized blocks of $x$, then $c$ cannot end with a non-coherent edge, so $z=y$; otherwise, $z = Q\mid \overline{Q} \mid Q' \mid \overline{Q'}$ where $Q \cup \overline{Q'} = R$ and both $Q$ and $Q'$ are unions of normalized blocks of $x$.
      Also $c\cap [z,y]$ is uniquely determined.

      Consider now the interval $[x,z]$. 
      This interval contains only coherent edges, thus by Part (1) it contains a unique increasing maximal chain which must coincide with $c \cap [x,z]$. Therefore there is an unique increasing maximal chain in $[x,\one]$ and $c=c_{x\one}$.


      \item[$\diamond$] In both Case $1$ and Case $2$ above, $\lambda(\E_{xz}) < \lambda(\E_{xz'})$ for any $z' \neq z$ that covers $x$ in the interval $[x,y]$. Then  $c_{xy}$ is lexicographically least among the maximal chains in $[x,y]$.
    \end{itemize}

    \item {\em Intervals $[x,y]$ where $y$ has only signed blocks or (unsigned) singleton blocks.}
    Recursively define a maximal chain $c_{xy}$ in $[x,y]$ as follows.
    
    \noindent For $x=y$, set $c_{yy} = y$. Assume then $x \prec y$.
    \begin{itemize}
      \item Case 1: there exist unsigned normalized blocks $R \neq R'$ of $x$ such that $R\cup R'$ is contained in some block of $y$.
      Among all such pairs $(R,R')$, choose the (only) one for which $(r(R), r(R'))$ is lexicographically minimal.
      Let $z$ be the $\Sigma$-labeled partition obtained from $x$ by replacing $R \mid \overline R \mid R' \mid \overline{R'}$ with $R\cup R'\mid \overline{R} \cup \overline{R'}$.
      Set $c_{xy} = (x \precdot c_{zy})$.
      Notice that $\E_{xz}$ is coherent, so $\lambda(\E_{xz}) = (0, r(R'))$.
      
      \item Case 2: no such pair $(R,R')$ exists.
      Then there are at most $\vert \Sigma(y) \vert$ elements $z\in [x,y]$ that cover $x$, since for every sign $\sigma \in \Sigma(y)$ there is at most one element $z$ such that $\mathcal{E}_{xz}$ is of sign $\sigma$.
      If possible, choose $z$ such that the sign of $\E_{xz}$ equals $\min \,\{ \sigma \in \Sigma(x) \: \mid \: x_{\sigma} \subsetneq y_{\sigma} \}$. Otherwise choose $z$ such that the sign of $\E_{xz}$ equals $\min \, \{ \sigma \in \Sigma(y) \: \mid \: x_{\sigma} \subsetneq y_{\sigma} \}$.
      Set $c_{xy} = (x \precdot c_{zy})$.
      Notice that $\E_{xz}$ is a signed edge, and $\lambda(\E_{xz})$ is of the form $(1,\ast)$.
    \end{itemize}

    \noindent We are going to prove that $c_{xy}$ is the unique increasing maximal chain in $[x,y]$, and that it is lexicographically least among all the maximal chains in $[x,y]$.
    
    \begin{itemize}
    \item[$\diamond$] By construction, $c_{xy}$ is increasing.
    \item[$\diamond$] Let $c$ be any increasing maximal chain in $[x,y]$. We want to show that $c=c_{xy}$.
      
    Since $y$ contains no unsigned block, the last edge of $c$ must be signed. In particular it must have a label of the form $(1, \ast)$.
    Then an increasing chain in $[x,y]$ can only contain coherent edges and signed edges. 
    In addition, coherent edges precede signed edges along $c$.
    Therefore there exists an element $z \in c$ such that $c \cap [x,z]$ only consists of coherent edges and $c \cap [z,y]$ only consists of signed edges.

    It is clear that every maximal chain in $[x,y]$ contains at least
    \[ t:= \vert \{ \sigma \in \Sigma(y) \: \mid \: x_\sigma \subsetneq y_\sigma \} \vert \]
    signed edges.
    Suppose that a maximal chain $d$ of $[x,y]$ contains more than $t$ signed edges. So $d$ contains at least two edges of the same sign $\tau$, and one of the following cases occurs. (If $d$ has more than two edges of sign $\tau$, consider the first two such edges.)
    
    \begin{itemize}
    \item[$\ast$] $\tau \in \Sigma(x)$ and the two edges of sign $\tau$ are separated only by edges of sign in $\Sigma(x)$. Then $d$ has two edges with the same label and therefore it is not increasing.
     
    \item[$\ast$] $\tau \in \Sigma(x)$ and the two edges of sign $\tau$ are separated only by edges of sign $\sigma \in \Sigma(y) \setminus \Sigma(x)$ such that $\tau < \sigma$. Then the two edges of sign $\tau$ have the same label, and $d$ is not increasing.
    
    \item[$\ast$] $\tau \in \Sigma(x)$ and the two edges of sign $\tau$ are separated by some edge of sign $\sigma \in \Sigma(y) \setminus \Sigma(x)$ such that $\sigma < \tau$. Then the last such edge has a label greater than or equal to the label of the second edge of sign $\tau$, and $d$ is not increasing.
    
    \item[$\ast$] $\tau \in \Sigma(y) \setminus \Sigma(x)$. Then the second edge of sign $\tau$ has a label smaller than or equal to the label of the first edge of sign $\tau$ and $d$ is not increasing.
    \end{itemize}
    
    In any case, $d$ is not increasing. So every increasing maximal chain in $[x,y]$ contains exactly $t$ signed edges, and all these edges are of different sign.
      
    Therefore $z$ is uniquely determined by $x$ and $y$.
    The only way for $c\cap [z,y]$ to be increasing is to contain first all the edges of sign $\sigma \in \Sigma(x)$ in increasing order and then all the edges of sign $\sigma \in \Sigma(y) \setminus \Sigma(x)$ in increasing order.
    Then $c\cap [z,y]$ is also uniquely determined.
      
    Consider now the interval $[x,z]$, which contains only coherent edges. By Part (1), such interval contains a unique increasing maximal chain which must coincide with $c \cap [x,z]$.
    In particular, there is an unique increasing maximal chain in $[x,y]$ and $c=c_{xy}$. 
    
%

    \item[$\diamond$] In both Case $1$ and Case $2$ above, $\lambda(\E_{xz}) < \lambda(\E_{xz'})$ for any $z' \neq z$ that covers $x$ in the interval $[x,y]$. Then  $c_{xy}$ is lexicographically least among the maximal chains in $[x,y]$.
    \end{itemize}
  
  \item {\em General intervals $[x,z]$ with $z\neq \one$.} Given any set of disjoint (signed or unsigned) blocks $X_1,\ldots,X_t$ let us write $\{\{X_1, \cdots , X_t\}\}$ for the $\Sigma$-labeled partition whose blocks are $X_1,\ldots,X_t$ and a singleton block for every element of $[[n]]\setminus (X_1\cup\ldots\cup X_t)$.
  
  Write
  \[ z = z_{\sigma_1} \mid \cdots \mid z_{\sigma_l} \mid R_1 \mid \overline{R_1} \mid \cdots \mid R_k \mid \overline{R_k}, \]
  let $x_0:= \{\{ B\in x : B\subseteq z_{\sigma}\textrm{ for some }\sigma\in \Sigma(z) \}\}$ be the $\Sigma$-labeled partition whose nonsingleton blocks are the nonsingleton blocks of $x$ contained in some signed block of $z$, and define the following intervals:
  \begin{IEEEeqnarray*}{lCl}
    J(0) &:=& \left[x_0, \, \{\{ z_{\sigma_1}, \cdots, z_{\sigma_l} \}\} \right] \\
    J(i) &:=& \left[ \{\{ B\in x : B\subseteq R_i\cup \overline{R_i} \}\},\, \{\{ R_i, \overline{R_i} \}\} \right] \text{ for } i=1,\ldots,k.
  \end{IEEEeqnarray*}
  There is a poset isomorphism
  \[ J(0) \times\cdots\times J(k) \to [x,z] \]
  mapping every $(k+1)$-tuple of partitions on the left-hand side to their common refinement.
  
  In Parts (3)-(4) we have proved that the labeling $\lambda$ on each $J(i)$ is an EL-labeling. If we let $\Lambda_i$ be the set of labels used in $J(i)$ we see that $\Lambda_i\cap \Lambda_j =\emptyset$ whenever $i\neq j$. Thus, the lexicographic order on $\Lambda_0\cup\cdots\cup \Lambda_{k}$ is a shuffle of the lexicographic orderings of the $\Lambda_i$s. The edge-labeling induced on $[x,z]$ as in Section \ref{sec_prod} equals 
  $\lambda$ which, by Theorem \ref{thm_product}, is then an EL-labeling of $[x,z]$.
   
  \end{enumerate}
\end{proof}

\subsection{Arrangements associated to root systems}

\begin{theorem}\label{thm_typeEL}
For any set of signs $\Sigma$ with a distinguished element $\overline\sigma \in \Sigma$, the poset of $\Sigma$-labeled partitions of type $A_n$, $B_n$, $C_n$ and $D_n$ is EL-shellable.
\end{theorem}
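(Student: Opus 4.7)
The plan is to apply Lemma \ref{lem_sub} with $P=\lpns$ equipped with the EL-labeling $\lambda$ of Theorem \ref{thm_main}, taking $Q$ to be the type-$T$ subposet in each case. Type $C_n$ is immediate, since $Q=\lpns$. For the remaining types the task reduces to checking that, for every pair $x\preceq y$ in $Q$, the unique increasing maximal chain of $[x,y]\subseteq\lpns$---constructed greedily in the five parts of the proof of Theorem \ref{thm_main}---lies entirely in $Q$.

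I would do this by inspecting each greedy move case by case. Coherent and non-coherent edges only merge unsigned blocks and leave the signed part of the partition unchanged, so they automatically preserve every type-$T$ condition on signed blocks. Signed edges appear only in Case 2 of Parts (2) and (4); such a move either enlarges an existing signed block (which cannot shrink, hence never produces a forbidden size-$2$ block of wrong sign) or introduces a brand-new signed block. For the latter I would fix the total order on $\Sigma$ with $\overline\sigma=\min\Sigma$; this way, in type $B_n$, any newly created size-$2$ block carries the exempted sign $\overline\sigma$. In Case 2 of Part (2) with $\Sigma(x)=\emptyset$, the new block covers all of $[[n]]$ and so has size $2n\geq 4$ for $n\geq 2$, handling type $D_n$. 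In Case 2 of Part (4), a new signed block of sign $\sigma\in\Sigma(y)\setminus\Sigma(x)$ arises only after Case 1 has been exhausted; this forces the absorbed unsigned pair of $x$ to partition $y_\sigma$ entirely, so $|z_\sigma|=|y_\sigma|\geq 4$ under the type-$D_n$ (or type-$B_n$ with $\sigma\neq\overline\sigma$) constraint, and no forbidden size-$2$ block appears.

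For type $A_n$ the subposet $Q$ contains no signed blocks, so no signed edge of $\lpns$ can occur within $Q$. Moreover every element of $\lpns$ below $[n]\mid\overline{[n]}$ already lies in $Q$, since no signed block $S=\overline S$ fits inside $[n]$ or $\overline{[n]}$. Hence intervals $[x,y]\subseteq\lpns$ with $y\in Q$ coincide with the corresponding intervals in $Q$, and the increasing chains built in Parts (3) and (5) of the construction stay in $Q$. Taking $[n]\mid\overline{[n]}$ as the top of $Q$ yields the classical partition lattice $\Pi_n$ via $R_1\mid\overline{R_1}\mid\cdots\mid R_k\mid\overline{R_k}\mapsto R_1\mid\cdots\mid R_k$, on which $\lambda$ restricts to one of the Bj\"orner EL-labelings of $\Pi_n$ (cf.\ Part (1) of the proof of Theorem \ref{thm_main}).

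The main obstacle is the bookkeeping for types $B_n$ and $D_n$: along the increasing chain one must carefully verify which signed blocks of the intermediate partitions could attain the forbidden size $2$, and show that the convention $\overline\sigma=\min\Sigma$ together with the preference for minimum-sign enlargement in the greedy algorithm prevents every such occurrence.
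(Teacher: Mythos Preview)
Your approach is correct and shares the paper's framework: both dispatch type $C_n$ immediately and reduce types $B_n$ and $D_n$ to Lemma~\ref{lem_sub}. Where you differ is in how you verify the hypothesis of that lemma. You trace through the greedy construction of the increasing chain step by step (Cases 1 and 2 of Parts (2)--(5) of Theorem~\ref{thm_main}) and argue that each move preserves membership in the subposet. The paper instead gives a short contradiction argument: if $i$ is minimal with $x_i\notin\mathcal P$, then the edge $\E_i$ must be a signed edge introducing a brand-new size-$2$ block of some forbidden sign $\sigma$; since the endpoint $z\in\mathcal P$ forces $z_\sigma\supsetneq(x_i)_\sigma$, there is a later edge $\E_j$ of the same sign $\sigma$, and a direct computation from Definition~\ref{def_lambda} yields $\lambda(\E_j)\leq\lambda(\E_i)$, contradicting that the chain is increasing. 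This label-comparison argument works for any total order on $\Sigma$, so your stipulation $\overline\sigma=\min\Sigma$ is unnecessary---indeed, even in your own Part~(4) analysis the newly created block $z_\sigma$ already coincides with $y_\sigma$, whose size is controlled by $y\in Q$ regardless of where $\overline\sigma$ sits in the order. For type $A_n$ the paper simply invokes that the partition lattice is a geometric lattice, whereas you observe more explicitly that the restricted labeling recovers a standard Bj\"orner EL-labeling of $\Pi_n$; either suffices. Your route is more hands-on and concrete; the paper's is slicker and order-independent.
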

\begin{proof}
 For type $C_n$ the claim is a direct consequence of Theorem \ref{thm_main}. For type $A_n$ the poset of components is isomorphic to a classical partition lattice which, being a geometric lattice, is EL-shellable.

Now let $\mathcal P$ denote a poset of $\Sigma$-labeled partitions of type $B_n$ or $D_n$ with a top element $\hat{1}$ added. Then, $\mathcal P$ is a ranked subposets of $\lpns$ with rank function induced by that of the ambient poset. In view of Lemma \ref{lem_sub} we consider $x\preceq z$ in $\mathcal P$ and write
\[ x\precdot x_1 \precdot \cdots \precdot
\underbrace{x_{i-1} \precdot x_{i}}_{\E_{i}} \precdot \cdots \precdot z \]
for the unique increasing chain in $\lpns$. We have to show that this chain is contained in $\mathcal P$. By way of contradiction let $i$ be minimal such that $x_i\not\in \mathcal P$. The only way this can happen is if there is $\sigma\in \Sigma(z) \setminus \Sigma(x_{i-1})$ such that $\vert(x_i)_{\sigma}\vert =2$ (in particular, $\E_i$ is a signed edge). This implies that $z_\sigma \neq \emptyset$, but since $z\in \mathcal P$ it must be $z_\sigma \supsetneq (x_i)_\sigma$. In particular there must be another edge $\E_j$ of sign $\sigma$ with $j > i$. But then $\lambda(\E_j) \leq \lambda(\E_i)$: a contradiction.
\end{proof}

\begin{corollary}\label{thm_rootEL}
Posets of components of (linear, toric, elliptic) arrangements associated to root systems are EL-shellable. In particular, $\Delta (\overline{\mathcal C (\mathcal A)})$ is homotopy equivalent to a wedge of $\vert T_{\mathcal A} (1,0) \vert$ spheres.
\end{corollary}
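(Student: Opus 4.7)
The plan is to assemble three ingredients already established in the paper: Bibby's combinatorial identification of the poset of components (Theorem \ref{thm_bibby}), the EL-shellability of the relevant posets of $\Sigma$-labeled partitions (Theorem \ref{thm_typeEL}), and the passage from EL-shellings to sphere counts via characteristic polynomials (Theorem \ref{thm_CP}) combined with the Tutte-type polynomial formula (Remark \ref{rem_CPTP}).

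First, given an arrangement $\mathcal{A}$ of Weyl type $X_n \in \{A_n, B_n, C_n, D_n\}$ and setting $\vert \Sigma \vert = 1, 2, 4$ according to whether $\mathcal{A}$ is linear, toric, or elliptic, Theorem \ref{thm_bibby} exhibits $\mathcal{C}(\mathcal{A})$ (with $\hat{1}$ adjoined if necessary; this step is automatic in type $A_n$, where the classical partition lattice already has a maximum) as the poset of $\Sigma$-labeled partitions of type $X_n$ introduced in Definition \ref{def_gen}. Theorem \ref{thm_typeEL} then furnishes an EL-labeling of this bounded ranked poset, establishing the first assertion of the corollary.

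For the sphere count I would apply Theorem \ref{thm_CP} to the bounded poset $P$ obtained above, obtaining a wedge of $k = (-1)^{\operatorname{rk}(P)-2}\chi_P(0)$ spheres, each of dimension $\operatorname{rk}(P) - 2$. Remark \ref{rem_CPTP} evaluates the characteristic polynomial of $\mathcal{C}(\mathcal{A})$ as $\chi_{\mathcal{C}(\mathcal{A})}(0) = (-1)^d T_{\mathcal{A}}(1, 0)$, where $d$ is the rank of $\mathcal{A}$. Substituting this into the formula for $k$ (after relating $\chi_P(0)$ to $\chi_{\mathcal{C}(\mathcal{A})}(0)$ in the cases where $\hat{1}$ was adjoined) and tracking signs yields the identity $k = \vert T_{\mathcal{A}}(1, 0) \vert$ claimed in the corollary.

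The only truly delicate step is this last sign-tracking: whether $P = \mathcal{C}(\mathcal{A})$ (as in the linear case, when the intersection lattice is already bounded) or $P = \mathcal{C}(\mathcal{A}) \cup \{\hat{1}\}$ (as in the toric and elliptic cases) shifts $\operatorname{rk}(P)$ by one and hence flips the parity of the overall sign. Since $k$ is by construction a non-negative integer, this forces the absolute value in the final expression and no further substantive work is needed.
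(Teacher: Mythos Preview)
Your proposal is correct and follows essentially the same route as the paper's proof: first Theorem~\ref{thm_bibby} combined with Theorem~\ref{thm_typeEL} for EL-shellability, then Theorem~\ref{thm_CP} together with Remark~\ref{rem_CPTP} for the sphere count. Your discussion of the sign-tracking and the $\hat 1$-adjunction is more explicit than the paper's terse two-line proof, but the underlying argument is the same.
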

\begin{proof}
The first claim is immediate from Theorem \ref{thm_typeEL} via the classification given in Theorem \ref{thm_bibby}. The second claim follows from Theorem \ref{thm_CP} and Remark \ref{rem_CPTP}.
\end{proof}

\begin{remark}\label{rem_C1}
Corollary \ref{thm_rootEL} expresses the homotopy type of posets of components of arrangements associated to root systems in terms of the arrangement's Tutte (or characteristic, cf.\ Remark \ref{rem_CPTP}) polynomial. This would allow us already to fill in the leftmost column of Table \ref{table:num-spheres} and, in principle, also the middle column (see Remark \ref{rem_CP}).
In the elliptic case, no explicit form for the Tutte polynomials of the corresponding group action is known to us. Therefore, in the following section we address the problem of the homotopy type of these posets from another (unified) perspective, using Remark \ref{rem_spherecount}. 
\end{remark}

\section{The homotopy type of posets of labeled partitions}\label{sec_sph}

In order to determine the homotopy type of posets of labeled partitions we have to count maximal decreasing chains in our EL-labeled posets (see Remark \ref{rem_spherecount}).
To this end, we first introduce \emph{increasing ordered trees}.
Such trees appeared in the literature under various names (e.g.\ heap-ordered trees \cite{chen1994heap}, ordered trees with no inversions \cite{gessel1995enumeration}, simple drawings of rooted plane trees \cite{klazar1997twelve}).
\begin{definition}
An \emph{increasing ordered tree} is a rooted tree, with nodes labeled $0,1,\dots,n$ (for some $n\geq 0$), such that:
\begin{itemize}
  \item each path from the root to any leaf has increasing labels (in particular, the root has label $0$);
  \item for each node, a total order of its children is specified.
\end{itemize}
\end{definition}
We will sometimes use a different (ordered) set of labels (but the root will have the minimal label).
See Figure \ref{fig:trees} for a picture of the three different increasing ordered trees on $3$ nodes.

\begin{figure}[htbp]
  \begin{tikzpicture}[treestyle]
  \node [circle,draw] (1) {0}
    child {node [circle,draw] (2) {1}
      child {node [circle,draw] (3) {2}
      }
    };
  \end{tikzpicture}
  \qquad
  \begin{tikzpicture}[treestyle]
  \node [circle,draw] (1) {0}
    child {node [circle,draw] (2) {1}
    }
    child {node [circle,draw] (3) {2}
  };
  \end{tikzpicture}
  \qquad
  \begin{tikzpicture}[treestyle]
  \node [circle,draw] (1) {0}
    child {node [circle,draw] (3) {2}
    }
    child {node [circle,draw] (2) {1}
  };
  \end{tikzpicture}
  \caption{All different increasing ordered trees on $3$ nodes. The second and the third tree differ in the total order of the children of the root.}
  \label{fig:trees}
\end{figure}
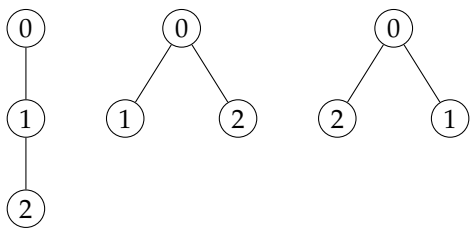

In general, the number of increasing ordered trees on $n+1$ nodes is $(2n-1)!!$.
This is a classical result which appeared several times in the literature  \cite{chen1994heap,gessel1995enumeration,klazar1997twelve}.
A simple proof by induction is as follows: given an increasing ordered tree on $n$ nodes (labeled $0,\dots,n-1$), there are $2n-1$ ways to append an additional node labeled $n$, to form an increasing ordered tree on $n+1$ nodes.

We are now ready to state and prove the general result about the number of decreasing maximal chains.

\begin{theorem}
  \label{thm:decreasing-chains}
  The number of decreasing maximal chains from $\zero$ to $\one$, in the subposet of $\lpns$ of type $B_n$, $C_n$, or $D_n$, is
  \begin{equation}
    \sum_{\pi \,=\, \pi_1 \mid \dots \mid \pi_k \,\in\, \Pi_n}
    f(\pi) \cdot (2|\pi_1|-3)!! \cdots (2|\pi_k|-3)!!.
    \label{eq:decreasing-chains}
  \end{equation}
  Here $\Pi_n$ is the poset of standard partitions of $[n]$, and $f\colon \Pi_n \to \mathbb N$ depends on the type ($B$, $C$, or $D$) and on $m=|\Sigma|$.
  In order to define $f(\pi)$, let us denote by $k$ the number of blocks of $\pi$, and by $s$ the number of nonsingleton blocks of $\pi$.
  
\medskip
\noindent\begin{tabular}{ll}
  {\bf Type $C_n$:} & $f(\pi) = \frac{(k+m-2)!}{(m-2)!}$.
  \, For $m=1$, set $f(\pi)=0$. \\[1em]
  
  {\bf Type $D_n$:} & $f(\pi) =
    \begin{cases}
      \sum_{r=2}^{\min(m,s)} \binom{m}{r} \frac{(k-2)!}{(r-2)!} \frac{s!}{(s-r)!} & \text{if $\pi \neq [n]$}; \\
      m-1 & \text{if $\pi = [n]$}.
    \end{cases}$ \\[2em]
  
  {\bf Type $B_n$:} & $f(\pi) =
    \begin{cases}
      \arraycolsep=1.4pt\def\arraystretch{1.6}
      \begin{array}{l}
	\sum_{r=2}^{\min(m,\,s+1)} \binom{m-1}{r-1} \frac{(k-2)!}{(r-2)!} (k-r+1)\frac{s!}{(s-r+1)!} \\
	\quad\quad\quad\quad\quad\!\! +  \sum_{r=2}^{\min(m-1,\,s)} \binom{m-1}{r} \frac{(k-2)!}{(r-2)!} \frac{s!}{(s-r)!}
      \end{array}
      & \text{if $\pi \neq [n]$}; \\[2.4em]
      m-1 & \text{if $\pi = [n]$}.
    \end{cases}$
\end{tabular}
\end{theorem}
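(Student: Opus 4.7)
My plan is to decompose every decreasing maximal chain into three consecutive phases based on the first coordinate of the EL-labels from Definition \ref{def_lambda}. Coherent, signed and non-coherent edges carry labels with first coordinate $0$, $1$ and $2$ respectively, and the final edge to $\one$ has label $(1,2)$. Hence a weakly decreasing chain from $\zero$ to $\one$ consists of a \emph{non-coherent phase} (labels $(2,\ast)$), followed by a \emph{signing phase} (labels $(1,\ast)$ with $\ast\geq 2$), ending in the edge to $\one$; coherent edges cannot appear. The first step of the proof is to establish this decomposition and to extract, from the state at the end of the non-coherent phase, the partition $\pi\in\Pi_n$ of $[n]$ induced by the supports of the unsigned blocks $R_j$. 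This accounts for the outer summation in \eqref{eq:decreasing-chains}.

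I would then fix $\pi$ and count weakly decreasing non-coherent merge sequences producing it. The crucial point is that every non-coherent merge label $(2,\ell)$ arising inside block $\pi_i$ satisfies $\ell\in\pi_i$, so labels from distinct blocks are automatically distinct; hence the global weak-decrease constraint uniquely determines the interleaving of the blocks' internal merge orders. It therefore suffices to show that, for a single block of size $m$, the number of weakly decreasing internal merge sequences building it from singletons equals $(2m-3)!!$. I would construct a bijection with increasing ordered trees on $m$ nodes: given a merge sequence, for each non-minimal $a\in\pi_i$ declare the parent of $a$ to be the minimum of the ``smaller-minimum side'' at the unique merge at which $a$'s subblock is first absorbed into one of strictly smaller minimum, and order the children of each node by the time of this absorption event. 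The inverse map reconstructs the merges by reading the tree in post-order, one merge per non-root node. Since there are $(2m-3)!!$ increasing ordered trees on $m$ nodes, the non-coherent phase contributes the factor $\prod_i (2|\pi_i|-3)!!$.

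The factor $f(\pi)$ then counts signing-phase completions. A short analysis of the signed-edge labels yields three constraints: (i) all \emph{creates} (signing an unsigned pair with a fresh sign) precede all \emph{grows} (absorbing an unsigned pair into an existing signed block); (ii) creates occur in strictly decreasing order of sign; (iii) grows may only target the top $r-1$ of the $r$ created signs, in weakly decreasing order of label, so as to keep all labels $\geq(1,2)$. Elementary enumeration then decomposes the count according to $r$ (the number of creates) into a product of the form
\[
\binom{m}{r}\cdot(\text{ordered assignment of creates to content blocks})\cdot\frac{(k-2)!}{(r-2)!},
\]
where the last factor equals $(k-r)!\binom{k-2}{r-2}$ and counts the orderings and sign-distributions of the $k-r$ grows. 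For type $C$ there is no restriction, and the create-assignment contributes $k!/(k-r)!$; summation over $r$ collapses to the closed form $(k+m-2)!/(m-2)!$ by the Chu--Vandermonde identity in falling-factorial form. For type $D$, creates may only target \emph{nonsingleton} blocks (else the resulting signed block would have content size $1$), so the create-assignment is $s!/(s-r)!$ and one obtains the stated sum. For type $B$, one splits cases on whether the distinguished sign $\overline\sigma$ (which alone may create a singleton) is used as a create: when it is, a short count gives $(k-r+1)\cdot s!/(s-r+1)!$ for the create-assignment, producing the first sum; when it is not, only $m-1$ signs are available and the create-assignment reverts to $s!/(s-r)!$, producing the second sum. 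The special case $\pi=[n]$ (where $k=1$, no grows, and $r=1$) is handled separately and contributes $m-1$, corresponding to the choice of a sign of position $\geq 2$ in $\Sigma$.

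The chief obstacles are the verification of the bijection in Step~2, where one must check that post-order reconstruction always produces merges with the asserted label (in particular, that the putative parent's subblock still has the correct minimum at the correct time) and that the child-ordering convention is internally consistent; and the case analysis for type $B$, in which tracking the position of $\overline\sigma$ among the creates and grows is the delicate part. The type $C$ identity follows immediately from Chu--Vandermonde, and the type $D$ formula is a direct transcription of the count once the constraints on creates and grows are in hand.
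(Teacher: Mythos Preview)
Your proposal is correct and follows essentially the same approach as the paper: both decompose decreasing chains into a non-coherent phase (counted via a bijection with increasing ordered trees, giving the product of double factorials) and a signed phase (counted by $f(\pi)$ via a decomposition by the number $r$ of distinct signs used), with the type $B$ case handled by splitting on whether the distinguished sign $\overline\sigma$ appears. Two small points you gloss over that the paper makes explicit: (1) the unique lift of each $\Pi_n$-merge to a non-coherent edge in $\lpns$ (your tree bijection as written really operates at the $\Pi_n$ level, and one needs that exactly one of the two lifts of each edge is non-coherent); (2) for the type $B$ count it is convenient to take $\overline\sigma$ to be the \emph{maximal} element of $\Sigma$, so that when $\overline\sigma$ is used it is necessarily the first create --- this is what makes your factor $(k-r+1)\cdot s!/(s-r+1)!$ come out cleanly.
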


\begin{proof}
  Let $\mathcal{D}$ be the set of decreasing maximal chains from $\zero$ to $\one$.
  When read bottom-to-top, chains $c\in\mathcal{D}$ consist of:
  \begin{itemize}
   \item a sequence of non-coherent edges, labeled $(2,*)$;
   \item then, a sequence of signed edges, labeled $(1,*)$;
   \item finally, one last edge labeled $(1,2)$.
  \end{itemize}
  Define a projection $\eta \colon \mathcal{D} \to \Pi_{n,\emptyset} \subseteq \lpns$, mapping a chain $c\in\mathcal{D}$ to the (unique) element $z \in c$ which comes after all the non-coherent edges and before all the signed edges.
  In addition, let $\rho \colon \Pi_{n,\emptyset} \to \Pi_n$ be the natural order-preserving projection that maps a partition $z = R_1 \mid \overline{R_1} \mid \dots \mid R_k \mid \overline{R_k} \in \Pi_{n,\emptyset}$ to the partition $\pi = \pi_1 \mid \dots \mid \pi_k \in \Pi_n$ with blocks $\pi_i = (R_i \cup \overline{R_i}) \cap [n]$.
  
  Label the edges of $\Pi_n$ following the ``non-coherent'' rule: if $\alpha \precdot \beta$ in $\Pi_n$, and $\beta$ is obtained from $\alpha$ by merging the two blocks $\alpha_i$ and $\alpha_j$, then label $\E_{\alpha\beta}$ by $\min(\alpha_i \cup \alpha_j)$.
  This is not an EL-labeling of $\Pi_n$, but it is (the second component of) the image of our EL-labeling of $\lpns$ through the map $\rho$, restricted to non-coherent edges of $\Pi_{n,\emptyset} \subseteq \lpns$.
  
  Both maps $\eta$ and $\rho$ are clearly surjective.
  We claim that, for each partition $\pi = \pi_1 \mid \dots \mid \pi_k \in\Pi_n$,
  \[ \big| (\rho\circ\eta)^{-1}(\pi) \big| = f(\pi) \cdot (2|\pi_1|-3)!! \cdots (2|\pi_k|-3)!!. \]
  We prove this through the following steps.
  \begin{enumerate}[1.]
    \item The number of decreasing maximal chains of $[\zero, \pi] \subseteq \Pi_n$ is
     \[ (2|\pi_1|-3)!! \cdots (2|\pi_k|-3)!!. \]
    \item In the preimage under $\rho$ of every decreasing maximal chain of $[\zero, \pi] \subseteq \Pi_n$, there is exactly one decreasing chain of $\Pi_{n,\emptyset}$ with all non-coherent edges.
    \item For every $z \in \rho^{-1}(\pi)$, the number of decreasing maximal chains of $[z, \one]$ without unsigned edges is $f(\pi)$.
    Notice that the interval $[z, \one]$ is different in each subposet of $\lpns$, depending on the type ($B$, $C$, or $D$); the definition of $f$ changes accordingly.
  \end{enumerate}
  These three steps prove the above claim, which in turn completes the proof of this theorem.
  Let us prove each of the three step.
  \begin{enumerate}[1.]
      \item The interval $[\zero, \pi]$ is a product of intervals $[\zero, \pi_1], \dots, [\zero, \pi_k]$, and the labeling splits accordingly.
      Without loss of generality, we can then assume that $\pi$ consists of a single block $\pi_1 \cong \{ 1, \dots, \ell \}$.
      Decreasing maximal chains $c$ of $[\zero, \pi]$ are in one-to-one correspondence with increasing ordered trees $T$ on nodes $\{1,\dots, \ell\}$: a node $j$ is a direct child of a node $i<j$ in $T$ if and only if the chain $c$ features a merge of blocks $R$ and $R'$ with $\min(R)=i$ and $\min(R')=j$; the set of direct children of a node is ordered according to the sequence of the corresponding merges in $c$.
      The number of such trees is $(2\ell-3)!!$, as discussed at the beginning of this section.
      
      \item Given a decreasing maximal chain $c$ of $[\zero, \pi] \subseteq \Pi_n$, a lift to $\Pi_{n,\emptyset}$ can be constructed explicitly by induction, starting from $\zero$ and parsing the chain in increasing order.
      In fact, for every edge $\alpha \precdot \beta$ in $\Pi_n$ and for every choice of $x \in \rho^{-1}(\alpha)$, there are two ways to lift $\beta$ to some $y \succdot x$, and in exactly one of these two cases the resulting edge $\E_{xy}$ is non-coherent.
      
      \item Such a chain is uniquely determined by an ordering of the blocks $\pi_1,\dots,\pi_k$ of $\pi$ (which will determine the order of the merges), and by a decreasing sequence of $k$ labels
      \begin{equation}
	(1,m) \geq (1,a_1) \geq \dots (1,a_k) \geq (1,2)
	\label{eq:decreasing-sequence}
      \end{equation}
      to be assigned to the edges (the sequence of labels uniquely determines the sequence of signs).
      Any ordering of the blocks of $\pi$ and any decreasing sequence of labels gives rise to a valid chain, provided that (in types $B_n$ and $D_n$) no unadmissible block $i \bar i_\sigma$ is created. Given a decreasing sequence of labels, we call such an ordering of the blocks of $\pi$ a \emph{valid ordering}.
      In type $C_n$, any ordering of the blocks of $\pi$ is always valid, regardless of the sequence of labels.
      Notice that for $m=1$ there are no decreasing sequences of labels: in this case $f(\pi)=0$.
      
      \begin{itemize}
	\item Let us first consider type $C_n$, so that all signed blocks are admissible.
	Then there are $k!$ possible orderings of the blocks, and $\binom{k+m-2}{k}$ decreasing sequences of $k$ labels, so we get
	\[ f(\pi) = k! \cdot \binom{k+m-2}{k} = \frac{(k+m-2)!}{(m-2)!}. \]
      
	\item Consider now type $D_n$.
	The sequence of signs associated to a decreasing sequence of labels as in \eqref{eq:decreasing-sequence} contains exactly $r$ different signs if and only if $a_r \geq r$ and $a_{r+1} \leq r$.
	Therefore, for a fixed $r \in \{1,\dots,m\}$, the sequences of labels which give rise to exactly $r$ different signs are those of the form
	\[ \qquad\quad (1,m) \geq (1,a_1) \geq \dots \geq (1,a_r) \geq (1,r) \geq (1,a_{r+1})\geq \dots \geq (1, a_k) \geq (1,2). \]
	The number of such sequences is $\binom{m}{r} \binom{k-2}{r-2}$ for $r\geq 2$.
	For $r=1$, there are $m-1$ sequences if $k=1$, and $0$ otherwise.
	
	If $k=1$ (i.e.\ $\pi = [n]$), exactly one sign is used and we get $f([n]) = m-1$. Suppose from now on $k\geq 2$, i.e.\ $\pi\neq [n]$.
	
	We have to compute the number of valid orderings of the $k$ blocks of $\pi$.
	Recall that $s$ is the number of nonsingleton blocks of $\pi$.
	Each time a sign appears for the first time in the chain, a nonsingleton block must be signed.
	Then, for a fixed number $r \in\{2,\dots, m\}$ of signs appearing in the chain, the number of valid orderings of the $k$ blocks is $s\left(s-1\right)\cdots \allowbreak \left(s-r+1\right) \allowbreak \cdot (k-r)!$.
	Therefore
	\begin{IEEEeqnarray*}{rCl}
	  f(\pi) \; &=& \sum_{r=2}^{\min(m,s)} \binom{m}{r} \binom{k-2}{r-2} \frac{s!}{(s-r)!} (k-r)! \\
	  & =& \sum_{r=2}^{\min(m,s)} \binom{m}{r} \frac{(k-2)!}{(r-2)!} \frac{s!}{(s-r)!}.
	\end{IEEEeqnarray*}
	
	\item Finally consider type $B_n$, where we have a distinguished sign $\overline\sigma$ that can admit singleton blocks.
	It is convenient to assume that $\sigma_1$ is the maximal element of $\Sigma$ with respect to the total order of Definition \ref{def:EL-labeling}.
	Then a sequence of labels $(1,m) \geq (1,a_1) \geq \dots \geq (1,a_k) \geq (1,2)$ gives rise to a sequence of signs containing $\sigma_1$ if and only if $a_1 = m$.
	Both cases $a_1=m$ and $a_1 \leq m-1$ can be treated as in type $D_n$. \qedhere
      \end{itemize}
  \end{enumerate}
\end{proof}

\subsection{Linear arrangements}

In the case of linear arrangements ($m=1$), the number of maximal decreasing chains is $0$. This happens because $\Pi_{n,\{\sigma\}} \setminus \{\one\}$ still has a unique maximal element $[n]_\sigma$. Therefore one is interested in counting the decreasing chains from $\zero$ to $[n]_\sigma$.

\begin{theorem}
  The number of maximal decreasing chains from $\zero$ to $[n]_\sigma$, in the subposet of $\Pi_{n, \{\sigma\}}$ of type $B_n$, $C_n$, or $D_n$, is given by \eqref{eq:decreasing-chains} with the following definition of $f\colon \Pi_n \to \mathbb N$.
  
  \medskip
  \begin{tabular}{ll}
    {\bf Type $B_n = C_n$:} & $f(\pi) = k!$. \\[0.1em]
    {\bf Type $D_n$:} & $f(\pi) = s \cdot (k-1)!$.
  \end{tabular}
\end{theorem}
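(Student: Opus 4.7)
The plan is to closely mirror the proof of Theorem \ref{thm:decreasing-chains}, adjusting only the step~(3) computation of $f(\pi)$ to account for the single-sign setting and for the fact that chains now terminate at $[n]_\sigma$ rather than at $\one$.

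First, I would decompose every decreasing maximal chain $c$ from $\zero$ to $[n]_\sigma$ at the unique transition from non-coherent edges (labels $(2,\ast)$) to signed edges (labels $(1,\ast)$); with $m=1$ no final edge to $\one$ is appended. Letting $z$ denote this transitional element and projecting via $\rho\colon\Pi_{n,\emptyset}\to\Pi_n$ to $\pi=\pi_1\mid\cdots\mid\pi_k$, steps~(1) and~(2) of the proof of Theorem \ref{thm:decreasing-chains} apply verbatim and contribute $(2|\pi_1|-3)!!\cdots(2|\pi_k|-3)!!$ for the non-coherent part, uniformly over $z\in\rho^{-1}(\pi)$.

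It then remains to determine $f(\pi)$: the number of decreasing chains from such a $z$ to $[n]_\sigma$ consisting entirely of signed edges. Each signed edge assimilates one unsigned pair $R_j\mid\overline{R_j}$ into the (possibly just created) signed block of sign $\sigma$. With $|\Sigma|=1$, both cases of Definition \ref{def_lambda} collapse to the label $(1,1)$, so every total ordering of the $k$ unsigned pairs yields a weakly decreasing label sequence and no lexicographic obstruction arises: $f(\pi)$ is simply the number of \emph{admissible} orderings.

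The admissibility of intermediate signed blocks is the only remaining check. In type $C_n$ (and $B_n$, which coincides with $C_n$ when $m=1$) every signed block is admissible, so all $k!$ orderings are valid and $f(\pi)=k!$. In type $D_n$, the first edge creates a signed block of size $2|R_j|$, which must differ from $2$; this forces $|R_j|\geq 2$ and hence $\pi_j$ to be nonsingleton, leaving $s$ choices for the first block, after which the signed block already has size $\geq 4$ and no further constraint arises. The remaining $(k-1)!$ orderings are therefore unrestricted, giving $f(\pi)=s\cdot(k-1)!$. I do not foresee any serious obstacle: the key simplification relative to the $m\geq 2$ case is the collapse of all signed-edge labels to $(1,1)$, which trivialises the sign-ordering analysis that was the most delicate part of the proof of Theorem \ref{thm:decreasing-chains}.
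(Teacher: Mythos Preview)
Your proposal is correct and follows precisely the approach the paper intends: the paper's own proof reads in its entirety ``It is completely analogous to the proof of Theorem \ref{thm:decreasing-chains},'' and you have supplied exactly the details of that analogy, including the key observation that with $|\Sigma|=1$ every signed edge receives the label $(1,1)$, which reduces step~(3) to a pure admissibility count.
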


\begin{proof}
  It is completely analogous to the proof of Theorem \ref{thm:decreasing-chains}.
\end{proof}

We want to give a graph-theoretic interpretation of Formula \eqref{eq:decreasing-chains} for these two definitions of $f\colon \Pi_n \to \mathbb N$, in order to obtain the (well-known) closed formulas of Table \ref{table:num-spheres} for linear types $B_n = C_n$ and $D_n$.

\begin{proposition}
  Formula \eqref{eq:decreasing-chains} with $f(\pi) = k!$ counts the increasing ordered trees on $n+1$ nodes.
  Therefore the number of decreasing chains from $\zero$ to $[n]_\sigma$ for the linear type $B_n=C_n$ is $(2n-1)!!$.
  \label{prop:decreasing-chains-linear-B}
\end{proposition}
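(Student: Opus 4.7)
The plan is to interpret each summand of \eqref{eq:decreasing-chains} with $f(\pi)=k!$ as a structural decomposition of the increasing ordered trees on $n+1$ nodes. Given such a tree $T$ on $\{0,1,\dots,n\}$, I would peel off the root $0$ and examine its $k$ children. Because every root-to-leaf path is label-increasing, the root of each subtree $T_c$ hanging from $0$ equals the minimum of the node set of $T_c$; thus the children of $0$ determine (i) an unordered partition $\pi=\pi_1\mid\dots\mid\pi_k\in\Pi_n$, (ii) a total order on the $k$ blocks (given by the ordering of the root's children in $T$), and (iii) for each block $\pi_i$, an increasing ordered tree on $\pi_i$ rooted at $\min(\pi_i)$.

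Conversely, any such triple reconstructs $T$ uniquely, so this defines a bijection. I would then invoke the classical count recalled just before Theorem \ref{thm:decreasing-chains}: the number of increasing ordered trees on $\ell$ nodes is $(2\ell-3)!!$ (with the convention $(-1)!!=1$, so a singleton block contributes a factor $1$). Multiplying $k!$ for the ordering of the blocks by $\prod_i(2|\pi_i|-3)!!$ for the choice of subtrees, and summing over $\pi\in\Pi_n$, recovers exactly \eqref{eq:decreasing-chains} with $f(\pi)=k!$. Since the same sum enumerates all increasing ordered trees on $n+1$ nodes, it equals $(2n-1)!!$; combined with the previous theorem this yields the announced count of decreasing maximal chains from $\zero$ to $[n]_\sigma$ in the linear subposet of type $B_n=C_n$.

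The argument is essentially bookkeeping and the only delicate point is the consistent treatment of singleton blocks through the $(-1)!!=1$ convention; once this is fixed, the root-decomposition bijection goes through with no further combinatorial input.
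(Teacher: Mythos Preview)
Your proposal is correct and follows essentially the same approach as the paper: decompose an increasing ordered tree on $\{0,1,\dots,n\}$ by removing the root and recording the partition of $[n]$ into the node sets of the root's subtrees, the ordering of these subtrees, and the subtree structures themselves. Your write-up is in fact slightly more explicit than the paper's (you spell out why each subtree is rooted at its minimum and flag the $(-1)!!=1$ convention), but the underlying bijection and the count are identical.
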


\begin{proof}
  Consider an increasing ordered tree on $n+1$ nodes, labeled $0,1,\dots,n$.
  The set of subtrees corresponding to the children of the root is a partition of $\{1,\dots,n\}$.
  For a fixed partition $\pi \in \Pi_n$, the possible trees that induce such partition can be recovered as follows: order the blocks of $\pi$ in $k!$ ways; for each block $\pi_i$, construct an increasing ordered tree with nodes labeled by elements of $\pi_i$, in $(2|\pi_i|-3)!!$ ways.
  Summing over the partitions $\pi\in\Pi_n$, Formula \eqref{eq:decreasing-chains} is obtained.
\end{proof}

An analogous interpretation can be given for the linear type $D_n$, provided that we introduce the following class of increasing ordered trees.

\begin{definition}\label{def_flourishing}
  Let $r\geq 0$ be an integer.
  An increasing ordered tree is \emph{$r$-flourishing} if the root has at least $r$ children, and none of the first $r$ children of the root is a leaf.
  See Figure \ref{fig:flourishing-trees} for some examples.
\end{definition}

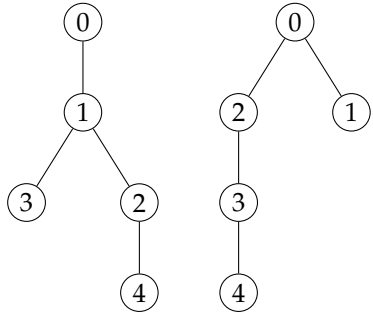
\begin{figure}[htbp]
  \begin{tikzpicture}[treestyle]
  \node [circle,draw] (1) {0}
    child {node [circle,draw] (2) {1}
      child {node [circle,draw] (4) {3}
      }
      child {node [circle,draw] (3) {2}
	child {node [circle,draw] (5) {4}
	}
      }
    };
  \end{tikzpicture}
  \qquad
  \begin{tikzpicture}[treestyle]
  \node [circle,draw] (1) {0}
    child {node [circle,draw] (3) {2}
      child {node [circle,draw] (4) {3}
	child {node [circle,draw] (5) {4}
	}
      }
    }
    child {node [circle,draw] (2) {1}
  };
  \end{tikzpicture}
  \caption{Both these trees are $1$-flourishing but not $2$-flourishing.}
  \label{fig:flourishing-trees}
\end{figure}

\begin{lemma}
  The number of $r$-flourishing trees on $n+1$ nodes is given by
  \[ (2n-2r-1)!! \cdot (n-r)(n-r-1)\cdots(n-2r+1) \]
  for $n \geq 2r$, and $0$ otherwise.
  \label{lemma:flourishing-trees}
\end{lemma}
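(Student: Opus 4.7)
My plan is to prove the formula by induction, with the boundary case $n < 2r$ handled by the easy observation that the first $r$ children of the root must collectively account for at least $2r$ non-root nodes (each is a non-leaf, contributing itself plus at least one descendant); hence $g_r(n) = 0$ when $n < 2r$, in agreement with the statement.

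For $n \geq 2r$, I would set up a recurrence by analyzing how the largest-labeled node $n$ sits in an $r$-flourishing tree $T$. Since descendants must carry larger labels, $n$ is necessarily a leaf, and I split into two cases according to whether removing $n$ preserves $r$-flourishing. In \emph{Case~A}, $n$ is not the unique descendant of any distinguished child, so removing $n$ yields an $r$-flourishing tree on $n$ nodes; conversely, from any $r$-flourishing tree on $n$ nodes there are exactly $2n - 1 - r$ reinsertion positions for $n$ (the $2n-1$ total positions minus the $r$ that would place $n$ among the first $r$ children of the root, creating a leaf in a distinguished slot). In \emph{Case~B}, the parent of $n$ is a distinguished child $c_j$ whose only descendant is $n$; removing both $c_j$ and $n$ and relabeling gives an $(r-1)$-flourishing tree on $n-1$ nodes, while recovery of $T$ demands a position $j \in \{1,\ldots,r\}$ together with the original label $c_j \in \{1,\ldots,n-1\}$. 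Summing the two contributions yields
\[ g_r(n) \;=\; (2n - 1 - r)\, g_r(n-1) \;+\; r(n-1)\, g_{r-1}(n-2). \]

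I would then close the argument by a double induction, outer on $r$ and inner on $n$, with base $r = 0$ furnished by the classical count $g_0(n) = (2n-1)!!$. For the inductive step, substituting the inductive formulas for $g_r(n-1)$ and $g_{r-1}(n-2)$ into the recurrence and factoring out $(2n-2r-3)!!\,(n-r-1)!/(n-2r)!$ reduces the claim to the polynomial identity
\[ (2n - 1 - r)(n - 2r) + r(n - 1) \;=\; (n - r)(2n - 2r - 1), \]
whose two sides are both equal to $2(n-r)^2 - (n-r)$ after direct expansion.

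The step that I expect will require the most care is the Case~B bookkeeping: one must verify that the triple consisting of the relabeled $(r-1)$-flourishing tree, the index $j$, and the label $c_j$ recovers the original $T$ without overcounting. Concretely, this amounts to checking that the inverse operation---shifting the labels of the smaller tree to leave room for $c_j$, inserting $c_j$ at position $j$ among root's children, and attaching $n$ as its unique child---is well-defined and produces each Case~B tree exactly once.
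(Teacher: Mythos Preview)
Your proof is correct and follows essentially the same approach as the paper: both set up the identical recurrence $g_r(n) = (2n-r-1)\,g_r(n-1) + r(n-1)\,g_{r-1}(n-2)$ by deleting the node with the largest label and splitting into the two cases according to whether this deletion preserves $r$-flourishing, and both then verify the closed formula against this recurrence. Your write-up is somewhat more detailed in the verification step (the paper simply calls it a ``straightforward computation'') and in the Case~B bookkeeping, but the underlying argument is the same.
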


\begin{proof}
  Let $\nu(n,r)$ be the number of $r$-flourishing trees on $n+1$ nodes.
  We are going to prove the following recursive relation:
  \[ \nu(n,r) = \nu(n-1,r) \cdot (2n-r-1) + \nu(n-2,r-1) \cdot (n-1) r. \]
  \begin{itemize}
    \item The first summand counts the $r$-flourishing trees obtained by appending an additional node $n$ to some $r$-flourishing tree on $n$ nodes.
    The node $n$ cannot be appended as one of the first $r$ children of the root, so $2n-r-1$ possible positions remain.
    
    \item The second summand counts the $r$-flourishing trees that would not be $r$-flourishing after removing the node $n$.
    Node $n$ must be the only child of some other node $i$, one of the first $r$ children of the root.
    There are $n-1$ choices for $i$, and $r$ choices for its position.
    Once the nodes $n$ and $i$ are removed, an $(r-1)$-flourishing tree on $n$ nodes remains.
  \end{itemize}
  
  The formula given in the statement holds for $r=0$ or $n=0$, and a straightforward computation shows that it respects the recursive relation.
\end{proof}

Similarly to Proposition \ref{prop:decreasing-chains-linear-B}, the number of decreasing chains for the linear type $D_n$ has the following graph-theoretic interpretation.

\begin{proposition}
  Formula \eqref{eq:decreasing-chains} with $f(\pi) = s\cdot (k-1)!$ counts the increasing ordered $1$-flourishing trees on $n+1$ nodes.
  Therefore the number of maximal decreasing chains from $\zero$ to $[n]_\sigma$ for the linear type $D_n$ is $(2n-3)!!\cdot (n-1)$.
  \label{prop:decreasing-chains-linear-D}
\end{proposition}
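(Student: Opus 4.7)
The plan is to mirror closely the proof of Proposition \ref{prop:decreasing-chains-linear-B}, replacing ``increasing ordered trees'' by ``$1$-flourishing'' ones, and then to invoke Lemma \ref{lemma:flourishing-trees} with $r=1$ to obtain the closed-form count.

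First I would set up the decomposition of an increasing ordered $1$-flourishing tree $T$ on $n+1$ nodes (labeled $0,1,\dots,n$). Exactly as in the proof of Proposition \ref{prop:decreasing-chains-linear-B}, the collection of subtrees hanging at the children of the root partitions $\{1,\dots,n\}$ into blocks $\pi_1,\dots,\pi_k$; the ordered sequence of children of the root specifies both this partition $\pi\in\Pi_n$ and an ordering of its blocks; and each subtree is itself an increasing ordered tree on its block, contributing an independent factor $(2|\pi_i|-3)!!$.

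The key novelty is the translation of the $1$-flourishing condition into this combinatorial data. The requirement is that the root has at least one child and that its first (leftmost) child is not a leaf, which means that the first block in the ordering must be a \emph{nonsingleton} block of $\pi$. Hence, for a fixed $\pi$ with $k$ blocks and $s$ nonsingleton blocks, there are exactly $s$ choices for the first block of the ordering, $(k-1)!$ choices for the ordering of the remaining blocks, and $\prod_i(2|\pi_i|-3)!!$ choices of subtrees. Summing over $\pi\in\Pi_n$ produces precisely Formula \eqref{eq:decreasing-chains} with $f(\pi)=s\cdot(k-1)!$, which by the preceding theorem equals the number of decreasing maximal chains from $\zero$ to $[n]_\sigma$ in the linear type $D_n$. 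The closed form $(2n-3)!!\cdot(n-1)$ then follows at once by applying Lemma \ref{lemma:flourishing-trees} with $r=1$.

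I do not anticipate a serious obstacle: the proof is essentially a one-parameter refinement of Proposition \ref{prop:decreasing-chains-linear-B}. The only point requiring a little care is the degenerate case $\pi=[n]$ (so $k=1$, $s=1$), where the formula gives $f([n])=1$, matching the unique $1$-flourishing configuration in which the root has a single non-leaf child carrying the entire remaining subtree; this is consistent with the definition of $f$ used in Theorem \ref{thm:decreasing-chains} for type $D_n$ in the relevant limit.
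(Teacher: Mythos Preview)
Your proposal is correct and follows exactly the route the paper takes: decompose a $1$-flourishing tree by the children of the root, translate the $1$-flourishing condition into ``the first block in the ordering is nonsingleton'' to obtain the factor $s\cdot(k-1)!$, and then apply Lemma \ref{lemma:flourishing-trees} with $r=1$. The only quibble is your final aside about Theorem \ref{thm:decreasing-chains} ``in the relevant limit'': the linear case ($m=1$) is handled by the separate theorem immediately preceding the proposition, not by specializing Theorem \ref{thm:decreasing-chains} (which would give $f([n])=m-1=0$), so that remark should be dropped.
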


\begin{proof}
  The first part is analogous to the proof of Proposition \ref{prop:decreasing-chains-linear-B}.
  The second part then follows from Lemma \ref{lemma:flourishing-trees} for $r=1$.
\end{proof}

\subsection{Toric arrangements}

In the toric case ($m=2$) the formulas of Theorem \ref{thm:decreasing-chains} are as follows.

\medskip
\begin{tabular}{ll}
  {\bf Type $C_n$:} & $f(\pi) = k!$. \\[0.1em]
  
  {\bf Type $B_n$:} & $f(\pi) = s \cdot (k-1)!$. \\[0.2em]
  
  {\bf Type $D_n$:} & $f(\pi) =
    \begin{cases}
      s(s-1)\cdot (k-2)! & \text{if $\pi \neq [n]$}; \\
      $1$ & \text{if $\pi = [n]$}.
    \end{cases}$
\end{tabular}
\medskip

In particular the number of decreasing chains in the toric type $C_n$ is the same as in the linear type $C_n$ (or $B_n$), and the number of decreasing chains in the toric type $B_n$ is the same as in the linear type $D_n$.

A closed formula for the toric type $D_n$ can also be found, similarly to the linear type $D_n$. One only has to separately take care of the partition $\pi = [n]$, which contributes by $(2n-3)!!$ to the total sum.

\begin{proposition}
  Formula \eqref{eq:decreasing-chains} with $f(\pi) = s(s-1)\cdot (k-2)!$ counts the increasing ordered $2$-flourishing trees on $n+1$ nodes.
  Therefore the number of maximal decreasing chains from $\zero$ to $\one$ for the toric type $D_n$ is
  \[ (2n-5)!!\cdot (n-2)(n-3) + (2n-3)!! = (2n-5)!! \cdot (n^2-3n+3). \]
  \label{prop:decreasing-chains-toric-D}
\end{proposition}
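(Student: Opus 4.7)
The plan is to follow the pattern of Propositions \ref{prop:decreasing-chains-linear-B} and \ref{prop:decreasing-chains-linear-D}: first establish a bijective interpretation of Formula \eqref{eq:decreasing-chains} with $f(\pi)=s(s-1)(k-2)!$ as a count of 2-flourishing trees, then evaluate it via Lemma \ref{lemma:flourishing-trees} and add the separate contribution of $\pi=[n]$, which in the toric type $D_n$ formula of Theorem \ref{thm:decreasing-chains} is $f([n])=m-1=1$ rather than $s(s-1)(k-2)!$.

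For the bijection, I would take an increasing ordered 2-flourishing tree on nodes $\{0,1,\ldots,n\}$ and look at the subtrees rooted at the children of $0$ in the prescribed order; their node sets partition $\{1,\ldots,n\}$ into an ordered sequence of blocks. The 2-flourishing condition says precisely that there are $k\ge 2$ children and that the first two are nonsingleton. Let $\pi\in\Pi_n$ be the underlying unordered partition, with $k$ blocks and $s$ of them nonsingleton. To reconstruct the tree, one picks an ordered pair of distinct nonsingleton blocks in $s(s-1)$ ways, orders the remaining $k-2$ blocks in $(k-2)!$ ways, and assigns each block of size $\ell$ one of the $(2\ell-3)!!$ increasing ordered tree structures (with the standard convention $(-1)!!=1$). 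Summing over $\pi$ recovers Formula \eqref{eq:decreasing-chains} with $f(\pi)=s(s-1)(k-2)!$; note that this quantity vanishes whenever $k<2$ or $s<2$, consistently with the absence of 2-flourishing trees in those degenerate cases.

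For the closed form, Lemma \ref{lemma:flourishing-trees} with $r=2$ gives that the number of 2-flourishing trees on $n+1$ nodes equals $(2n-5)!!\cdot(n-2)(n-3)$. The decreasing chain count for the toric type $D_n$ then adds the $\pi=[n]$ contribution $1\cdot(2n-3)!!=(2n-3)!!$. Writing $(2n-3)!!=(2n-3)(2n-5)!!$ and combining,
\[
(2n-5)!!\cdot(n-2)(n-3) + (2n-3)(2n-5)!! = (2n-5)!!\bigl((n-2)(n-3)+(2n-3)\bigr) = (2n-5)!!\cdot(n^2-3n+3).
\]

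The argument is routine given the preparatory material; the only subtlety, easy to overlook, is that the tree bijection captures $f(\pi)=s(s-1)(k-2)!$ uniformly in $\pi$, whereas the toric type $D_n$ assignment of Theorem \ref{thm:decreasing-chains} replaces this by $f([n])=1$ in the exceptional case $\pi=[n]$, so that case must be added back by hand to convert the 2-flourishing tree count into the decreasing chain count.
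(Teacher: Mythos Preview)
Your proposal is correct and takes essentially the same approach as the paper: the paper's proof simply says the first part is analogous to Proposition \ref{prop:decreasing-chains-linear-B} and the second follows from Lemma \ref{lemma:flourishing-trees} with $r=2$, with the remark preceding the proposition already noting that the contribution of $\pi=[n]$ (equal to $(2n-3)!!$) must be handled separately. Your write-up faithfully expands these two sentences, including the bijection and the arithmetic simplification.
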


\begin{proof}
  The first part is analogous to the proof of Proposition \ref{prop:decreasing-chains-linear-B}.
  Then the second part follows from Lemma \ref{lemma:flourishing-trees} for $r=2$.
\end{proof}

\subsection{Elliptic arrangements}

In the elliptic case ($m=4$), we are going to derive a closed formula only for type $C_n$.
In order to do so, we introduce a variant of increasing ordered trees.

\begin{definition}\label{def_blooming}
  Let $q\geq 0$ be an integer. A \emph{$q$-blooming tree} on $n+1$ nodes is an increasing ordered tree on $n+1$ nodes, with $q$ extra indistinguishable unlabeled nodes (called \emph{blooms}) appended to the root.
  The only thing that matters about blooms is their position in the total order of the children of the root.
  See Figure \ref{fig:blooming-trees} for some examples.
\end{definition}

\begin{figure}[htbp]
  \begin{tikzpicture}[treestyle]
  \node [circle,draw] (1) {0}
    child {node [circle,draw,double] (a) {\phantom{a}}}
    child {node [circle,draw,double] (b) {\phantom{a}}}
    child {node [circle,draw] (2) {1}
      child {node [circle,draw] (3) {2}
      }
    };
  \end{tikzpicture}
  \qquad
  \begin{tikzpicture}[treestyle]
  \node [circle,draw] (1) {0}
    child {node [circle,draw,double] (a) {\phantom{a}}}
    child {node [circle,draw] (2) {1}
      child {node [circle,draw] (3) {2}
      }
    }
    child {node [circle,draw,double] (b) {\phantom{a}}};
  \end{tikzpicture}
  \qquad
  \begin{tikzpicture}[treestyle]
  \node [circle,draw] (1) {0}
    child {node [circle,draw] (2) {1}
      child {node [circle,draw] (3) {2}
      }
    }
    child {node [circle,draw,double] (a) {\phantom{a}}}
    child {node [circle,draw,double] (b) {\phantom{a}}};
  \end{tikzpicture}
  \caption{All different $2$-blooming trees constructed from the leftmost increasing ordered tree of Figure \ref{fig:trees} (these are $3$ of the $15$ different $2$-blooming trees on $3$ nodes).
  Blooms are unlabeled and with a double border.
  }
  \label{fig:blooming-trees}
\end{figure}
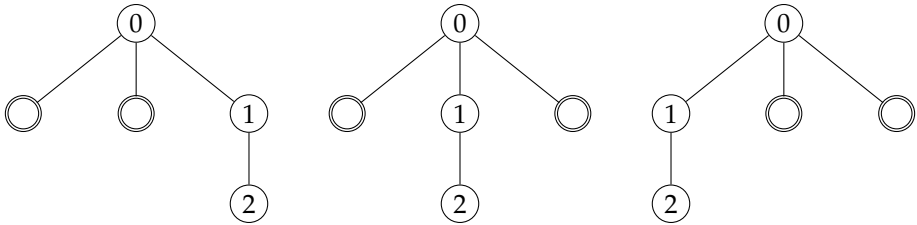

\begin{lemma}
  The number of $q$-blooming trees on $n+1$ nodes is
  \[ \frac{(2n+q-1)!!}{(q-1)!!}. \]
  \label{lemma:blooming-trees}
\end{lemma}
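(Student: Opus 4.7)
The plan is to prove the formula by induction on $n$, mirroring the classical argument for the count $(2n-1)!!$ of increasing ordered trees recalled earlier in this section. Fix $q\geq 0$ and let $N(n,q)$ denote the number of $q$-blooming trees on $n+1$ nodes; the goal is to establish the recursion $N(n,q)=(2n+q-1)\,N(n-1,q)$, with base case $N(0,q)=1$, from which the closed form $N(n,q)=(2n+q-1)!!/(q-1)!!$ follows by telescoping (using the usual convention $(-1)!!=1$). The base case is immediate: a $q$-blooming tree on a single labeled node consists of the root together with $q$ indistinguishable blooms appended in the unique possible configuration.

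For the inductive step, I would count the ways of extending a $q$-blooming tree on labels $\{0,1,\dots,n-1\}$ to a $q$-blooming tree on $\{0,1,\dots,n\}$ by inserting the new maximal label $n$. Since the labels along every root-to-leaf path must be increasing, the node $n$ is necessarily attached as a leaf of some labeled node. A $q$-blooming tree on $n$ labeled nodes and $q$ blooms has $n+q$ nodes, hence $n+q-1$ edges, each of which corresponds to a child of a labeled node (blooms have no children, being leaves at the root). Writing $c_i$ for the number of children of the labeled node $i$, one obtains $\sum_i c_i = n+q-1$, and the number of slots available for inserting the leaf $n$ among the ordered children of labeled nodes is $\sum_i (c_i+1) = (n+q-1)+n = 2n+q-1$. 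Each such insertion yields a distinct $q$-blooming tree, which gives the claimed recursion.

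The only point that requires a moment of care is the bookkeeping at the root: although the blooms are indistinguishable from each other, their positions within the total order of the root's children \emph{do} contribute to the number of insertion slots, so the count at the root is $c_0+1$ with $c_0$ including the blooms. Once this is settled, there is no genuine obstacle, and the closed formula for $N(n,q)$ follows at once by iterating the recursion from the base case.
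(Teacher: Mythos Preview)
Your proof is correct and follows essentially the same inductive argument as the paper: establish the base case $N(0,q)=1$ and the recursion $N(n,q)=(2n+q-1)\,N(n-1,q)$ by counting the positions at which the maximal label $n$ can be appended. Your explicit computation $\sum_i(c_i+1)=(n+q-1)+n$ for the number of insertion slots is a nice touch that the paper leaves implicit.
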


\begin{proof}
  The proof is by induction on $n$.
  For $n=0$ there is only one $q$-blooming tree, consisting of the root with $q$ blooms attached to it.
  Given a $q$-blooming tree on $n\geq 1$ nodes (labeled $0,\dots,n-1$), there are exactly $2n+q-1$ positions where an additional node $n$ can be attached in order to obtain a $q$-blooming tree on $n+1$ nodes.
  Every $q$-blooming tree on $n+1$ nodes is obtained exactly once, from some $q$-blooming tree on $n$ nodes.
\end{proof}

\begin{proposition}
  Formula \eqref{eq:decreasing-chains} with $f(\pi) = \frac{(k+m-2)!}{(m-2)!}$ counts the increasing ordered $\left(m-2\right)$-blooming trees on $n+1$ nodes.
  Therefore the number of decreasing chains from $\zero$ to $\one$ for the elliptic type $C_n$ is $(2n+1)!!$.
  \label{prop:decreasing-chains-elliptic-C}
\end{proposition}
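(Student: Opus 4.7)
The plan is to mimic closely the proof of Proposition \ref{prop:decreasing-chains-linear-B}, establishing a bijection between the data enumerated by Formula \eqref{eq:decreasing-chains} with $f(\pi)=(k+m-2)!/(m-2)!$ and the set of increasing ordered $(m-2)$-blooming trees on $n+1$ nodes, and then invoking Lemma \ref{lemma:blooming-trees} to obtain the closed formula in the elliptic case $m=4$.

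First I would fix an increasing ordered $(m-2)$-blooming tree $T$ on $n+1$ nodes labeled $0,1,\dots,n$. The children of the root are of two kinds: $m-2$ indistinguishable blooms, and the roots of $k$ labeled subtrees whose node-sets partition $\{1,\dots,n\}$ into blocks $\pi_1 \mid \dots \mid \pi_k$, yielding an element $\pi \in \Pi_n$. Since $T$ is increasing, the root of each non-bloom subtree is forced to be the minimum of its block, so the subtree attached at that child is exactly an increasing ordered tree on that block (with the block's minimum as its root).

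Next I would count, for each fixed $\pi = \pi_1 \mid \dots \mid \pi_k \in \Pi_n$, the $(m-2)$-blooming trees that induce the partition $\pi$. The sequence of children of the root consists of $k$ distinguishable subtrees interleaved with $m-2$ indistinguishable blooms, giving $\binom{k+m-2}{m-2}\cdot k! = (k+m-2)!/(m-2)! = f(\pi)$ arrangements. Independently, for each block $\pi_i$ an increasing ordered tree rooted at $\min(\pi_i)$ can be built in $(2|\pi_i|-3)!!$ ways, as recalled at the start of Section \ref{sec_sph}. Multiplying these contributions and summing over $\pi \in \Pi_n$ reproduces Formula \eqref{eq:decreasing-chains} with the prescribed $f$, proving the first assertion.

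For the closed formula, specialize to $m=4$, so $q = m-2 = 2$. Lemma \ref{lemma:blooming-trees} then gives the total number of $2$-blooming trees on $n+1$ nodes as $(2n+q-1)!!/(q-1)!! = (2n+1)!!/1!! = (2n+1)!!$, which by Theorem \ref{thm:decreasing-chains} and Remark \ref{rem_spherecount} is the desired number of maximal decreasing chains in the elliptic type $C_n$ poset. I do not expect any serious obstacle: the only delicate point is bookkeeping of the bloom/subtree interleaving, but the identity $\binom{k+m-2}{m-2}\cdot k!=(k+m-2)!/(m-2)!$ makes this transparent, and the rest of the argument is parallel to the linear and toric cases already handled.
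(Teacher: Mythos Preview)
Your proposal is correct and follows essentially the same approach as the paper: the paper's proof simply says the first part is analogous to Proposition \ref{prop:decreasing-chains-linear-B} and then invokes Lemma \ref{lemma:blooming-trees} with $q=2$, which is precisely what you spell out in detail. Your explicit bookkeeping of the bloom/subtree interleaving via $\binom{k+m-2}{m-2}\cdot k!=(k+m-2)!/(m-2)!$ is the natural elaboration of the paper's terse reference.
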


\begin{proof}
  The first part is analogous to the proof of Proposition \ref{prop:decreasing-chains-linear-B}.
  Then the second part follows from Lemma \ref{lemma:blooming-trees} for $q=2$.
\end{proof}

\begin{remark}
  A closed formula for $r$-flourishing $q$-blooming trees on $n+1$ nodes (if such a formula exists at all) would yield a closed formula for the number of decreasing chains also for elliptic types $B_n$ and $D_n$.
\end{remark}

\bibliography{bibliography}
\bibliographystyle{amsalpha-abbr}

\end{document}